\newtheorem{thm}{Theorem}
\newtheorem{lem}{Lemma}
\newtheorem{cor}{Corollary}
\theoremstyle{definition}
\newtheorem{defn}{Definition}
\newtheorem{rem}{Remark}
\newtheorem{ques}{Question}
\newtheorem{conj}{Conjecture}
\newtheorem{prob}{Problem}
\renewcommand{\Re}{\mathbb R}
\newcommand{\B}{\mathbf B}
\newcommand{\Sph}{\mathbb{S}}
\newcommand{\E}{\mathcal{E}}
\renewcommand{\O}{\mathcal{O}}
\DeclareMathOperator{\inter}{int}
\DeclareMathOperator{\bd}{bd}
\DeclareMathOperator{\conv}{conv}
\DeclareMathOperator{\proj}{proj}
\DeclareMathOperator{\perim}{perim}
\DeclareMathOperator{\diam}{diam}
\DeclareMathOperator{\relint}{relint}
\DeclareMathOperator{\vol}{vol}
\begin{document}

\parskip=4pt

\title[Monostable polyhedra]{A solution to some problems of Conway and Guy on monostable polyhedra}

\author[Z. L\'angi]{Zsolt L\'angi}

\thanks{The author is supported by the National Research, Development and Innovation Office, NKFI, K-134199, the J\'anos Bolyai Research Scholarship of the Hungarian Academy of Sciences, the \'UNKP-20-5 New National Excellence Program by the Ministry of Innovation and Technology and the NRDI Fund (TKP2020 IES, Grant No. TKP2020 BME-IKA-VIZ).}

\address{MTA-BME Morphodynamics Research Group and Department of Geometry, Budapest University of Technology, Egry J\'ozsef utca 1., Budapest 1111, Hungary}  \email{zlangi@math.bme.hu}

\keywords{monostable, unistable, equilibrium point, G\"omb\"oc, polyhedral approximation}

\subjclass{52B10, 52B15, 70C20}

\begin{abstract}
A convex polyhedron is called monostable if it can rest in stable position only on one of its faces. The aim of this paper is to investigate three questions of Conway, regarding monostable polyhedra, which first appeared i a 1969 paper of Goldberg and Guy (M. Goldberg and R.K. Guy, \emph{Stability of polyhedra (J.H. Conway and R.K. Guy)}, SIAM Rev. \textbf{11} (1969), 78-82). In this note we answer two of these problems and make a conjecture about the third one. The main tool of our proof is a general theorem describing approximations of smooth convex bodies by convex polyhedra in terms of their static equilibrium points. As another application of this theorem, we prove the existence of a convex polyhedron with only one stable and one unstable point.
\end{abstract}

\maketitle


\section{Introduction}\label{sec:intro}

The study of static equilibrium points of convex bodies started with the work of Archimedes \cite{archimedes}, and has been continued throughout the history of science in various disciplines: from geophysics and geology \cite{Pebbles,ceug} leading to examination of the possible existence of water on Mars \cite{Mars}, to robotics and manufacturing \cite{Varkonyi,Bohringer} to biology and medicine \cite{pill,dpill,turtles}.
In modern times, the mathematical aspects of this concept was started by a problem of Conway and Guy \cite{conway} in 1966 who conjectured that there is no homogeneous tetrahedron which can stand in rest only on one of its faces when placed on a horizontal plane, but there is a homogeneous convex polyhedron with the same property. These two questions were answered by Goldberg and Guy in \cite{goldberg} in 1969, respectively (for a more detailed proof of the first problem, see \cite{Dawson}), who called the convex polyhedra satisfying this property \emph{monostable} or \emph{unistable}. In addition, in \cite{goldberg} Guy presented some problems regarding monostable polyhedra, stating that three of them are due to Conway (for similar statements in the literature, see e.g. \cite{CFG,DawsonFinbow,dumitrescu}). These three questions appear also in the problem collection of Croft, Falconer and Guy \cite{CFG} as Problem B12. The aim of our paper is to examine these problems.

\begin{prob}\label{prob1}
Can a monostable polyhedron in the Euclidean $3$-space $\Re^3$ have an $n$-fold axis of symmetry for $n > 2$?
\end{prob}

Before the next problem, recall that the \emph{girth} of a convex body in $\Re^3$ is the minimum perimeter of an orthogonal projection of the body onto a plane \cite{dumitrescu}.

\begin{prob}\label{prob2}
What is the smallest possible ratio of diameter to girth for a monostable polyhedron?
\end{prob}

\begin{prob}\label{prob3}
What is the set of convex bodies uniformly approximable by monostable polyhedra, and does this contain the sphere?
\end{prob}

It is worth noting that, according to Guy \cite{goldberg}, Conway showed that no body of revolution can be monostable, and also that the polyhedron constructed in \cite{goldberg} has a $2$-fold rotational symmetry. Problems~\ref{prob1}-\ref{prob3} appear also in the problem collection of Klamkin \cite{Klamkin}, and Problem~\ref{prob1} and  some other problems for monostable polyhedra appear in a 1968 collection of geometry problems of Shephard \cite{Shephard}, who described these objects as `a remarkable class of convex polyhedra' whose properties `it would probably be very rewarding and interesting to make a study of'.

Before stating our main result, we collect some related results and problems from the literature. Here, we should first mention from \cite{goldberg} the problem of finding the minimal dimension $d$ in which a $d$-simplex can be monostable. This problem have been investigated by Dawson et al. \cite{Dawson,Dawson2,Dawson3,DawsonFinbow}, who proved that there is no monostable $d$-simplex if $d \leq 8$ and there is a monostable $11$-simplex. With regard to Problem XVI in \cite{Shephard}, asking about the minimum number of faces of a monostable polyhedron in $\Re^3$, the original construction of Guy \cite{goldberg} with $19$ faces (attributed also to Conway) was modified by Bezdek \cite{Bezdek} to obtain a monostable polyhedron with $18$ faces, while a computer-aided search by Reshetov \cite{Reshetov} yields a monostable polyhedron with $14$ faces. In \cite{Heppes}, Heppes constructed a homogeneous tetrahedron in $\Re^3$ with the property that putting it on a horizontal plane with a suitable face, it rolls twice before finding a stable position. Another interesting convex body is found by Dumitrescu and T\'oth \cite{dumitrescu}, who constructed a convex polyhedron $P$ with the property that after placing it on a horizontal plane with a suitable face, it covers an arbitrarily large distance while rolling until it finds a stable position. Finally, we remark that a systematic study of the equilibrium properties of convex polyhedra was started in \cite{DKLRV}.

Our main result is the following, where $d_H(\cdot, \cdot)$ and $\B^3$ denotes the Hausdorff distance of convex bodies, and the closed unit ball in $\Re^3$ centered at the origin $o$, respectively.

\begin{thm}\label{thm:rotational}
For any $n \geq 3$, $n \in \mathbb{Z}$ and $\varepsilon > 0$ there is a homogeneous monostable polyhedron $P$ such that $P$ has an $n$-fold rotational symmetry and
$d_H(P, \B^3) < \varepsilon$.
\end{thm}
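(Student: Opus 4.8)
The plan is to start from a smooth convex body of revolution that is "monostable in the smooth sense" — i.e. has exactly one stable equilibrium — and then perturb and polyhedrally approximate it, using the general approximation theorem (the "main tool" advertised in the abstract, which I assume is available as a prior result) to control equilibrium points. Concretely, I would first construct a smooth strictly convex body $K_0$ of revolution about the vertical axis which is $\varepsilon/3$-close to $\B^3$ in Hausdorff distance and which, as a homogeneous body, has a single stable equilibrium point (at the "bottom") and a single unstable one (at the "top"); the classical teardrop-type profile works, and closeness to the ball is arranged by taking the profile curve uniformly close to a semicircle while keeping the curvature condition that kills all intermediate equilibria. Such a body of revolution is, of course, \emph{not} monostable as a solid in the strict face sense — it has a circle of equilibria — so the rotational symmetry must be broken, which is exactly why Conway's remark (no body of revolution is monostable) does not obstruct us.

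The second step is to break the continuous rotational symmetry down to the discrete $n$-fold one while creating exactly one stable face. I would modify $K_0$ near its bottom by replacing a small spherical-type cap around the lowest point with a flat horizontal facet, and simultaneously fluting the side profile with an $n$-fold periodic radial perturbation of amplitude $o(\varepsilon)$, so that the resulting body $K_1$ is still $C^1$ (or piecewise smooth), still $2\varepsilon/3$-close to $\B^3$, is invariant under the rotation by $2\pi/n$ about the axis, and has the property that on the "side" and "top" there are no stable equilibria at all — the energy (height of center of mass) is strictly decreasing along every steepest-descent path toward the unique bottom facet. The $n$-fold fluting is what converts the degenerate circle of equilibria of $K_0$ into a finite set of non-stable ones (saddles and one unstable point), while the flat bottom facet is the unique sink.

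The third step is the polyhedral approximation. Here I invoke the general theorem from the paper: any smooth (or suitably generic piecewise-smooth) convex body can be approximated in Hausdorff distance by convex polyhedra whose stable, unstable, and saddle equilibria are in bijective, position-close correspondence with the non-degenerate equilibria of the body. Applying it to $K_1$ with tolerance $\varepsilon/3$, and taking the approximating polyhedron $P$ to be chosen $n$-fold symmetric (the approximation construction can be symmetrized: approximate a fundamental wedge and take the union of its $n$ rotated copies, intersected with the approximations of the top and bottom), we obtain a homogeneous convex polyhedron $P$ with an $n$-fold rotational symmetry, with $d_H(P,\B^3)<\varepsilon$, and with exactly one stable equilibrium — which for a polyhedron means exactly one face it can rest on. That is precisely monostability.

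The main obstacle I expect is the interface between the continuous symmetry of $K_0$ and the conclusion: one must show the $n$-fold fluting can be made small enough to preserve Hausdorff-closeness and the global "no stable equilibrium except the bottom" property, yet large enough (and generic enough) that all the equilibria it creates on the side are non-degenerate, so that the approximation theorem applies and no \emph{extra} stable face sneaks in on the polyhedron. Equivalently, the delicate point is a quantitative one: the energy landscape of $K_1$ must have a strictly negative "drift" toward the bottom facet uniformly away from the equilibria, with a margin exceeding the perturbation introduced by passing to $P$; verifying this margin — essentially a uniform lower bound on $\|\nabla(\text{height of center of mass})\|$ off a neighborhood of the finitely many equilibria, together with nondegeneracy (Morse condition) at them — is where the real work lies. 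Everything else (symmetrization of the approximation, bookkeeping of Hausdorff distances as $\varepsilon/3+\varepsilon/3+\varepsilon/3$) is routine.
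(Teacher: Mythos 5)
Your Step 1 does not work as stated, and the obstruction is exactly Conway's theorem quoted in the paper. You want a homogeneous convex body of revolution $K_0$, close to $\B^3$, whose only equilibria are one stable point (south pole) and one unstable point (north pole). Such a body cannot exist: if the axis is the $z$-axis, the centroid is at $o$, and $g(z)=f(z)^2$ with $f$ the cross-sectional radius at height $z$, then the latitude $z$ carries a circle of equilibria precisely when $g'(z)+2z=0$. The centroid condition reads $\int z\,g(z)\,dz=0$, whence $\int\bigl(g'(z)+2z\bigr)g(z)\,dz=\bigl[\tfrac12 g^2\bigr]+2\int z g\,dz=0$; since $g>0$ on the open interval, $g'+2z$ must change sign, so a degenerate latitude circle of equilibria is forced. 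This is precisely "no body of revolution is monostable," which the paper uses as the engine of Lemma~\ref{lem:discrete}. Your text is internally inconsistent here: you first assert the curvature condition "kills all intermediate equilibria," and a few lines later invoke "the degenerate circle of equilibria of $K_0$." The second statement is the true one, but it contradicts the first, and the construction of Step 2 (breaking the circle into saddles plus unstable points by an $n$-fold fluting) is never reconciled with Step 1's claim. There is a secondary problem in Step 2 as well: you propose to replace the bottom cap of $K_0$ by a flat facet before invoking the approximation theorem, but that theorem (Theorem~\ref{thm:approx}) requires its input to lie in $(S,U)_c$, i.e., to be smooth with strictly positive principal curvatures at each equilibrium. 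A flat facet destroys both properties, so the theorem would not apply to your $K_1$; the flat facet is in any case unnecessary, since the theorem itself outputs a polyhedron with exactly one stable face.

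The paper's route avoids all of this by never passing through a body of revolution. It takes the convex hull of $\B^3$ with $n$ points $(1+\varepsilon)p_i$, where the $p_i$ are the vertices of a regular $n$-gon inscribed in a small circle on $\Sph^2$ parallel to (but off) the equator. This body has only $n$-fold (discrete) symmetry from the outset, so no latitude circle of equilibria can occur; the asymmetric placement of the bumps pushes the centroid slightly off $o$, making one pole stable and the other unstable, the $n$ cone apices unstable, and creating $n$ saddles on the cones. After an inner-outer parallel smoothing and rounding the cone generatrices, the result lands in $(1,n+1)_c$, and Theorem~\ref{thm:approx} finishes the job. If you want to salvage your plan, you should drop the claim that all intermediate equilibria can be killed, accept the forced degenerate circle, place it near a pole, and then show the $n$-fold fluting resolves it into $n$ saddles and $n$ unstable (not stable) points while keeping the body smooth with positive curvature at each equilibrium; at that point you are essentially reproducing the paper's construction by a more roundabout path.
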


Theorem~\ref{thm:rotational} answers Problem~\ref{prob1}, and also the case of a sphere in Problem~\ref{prob3}. In addition, from it we may deduce Corollary~\ref{cor:girth} (for a lower bound without proof, see also \cite{dumitrescu}). This solves Problem~\ref{prob2}.
Here, for any convex body $K \subset \Re^3$, we denote by $\diam(K)$ and $g(K)$ the \emph{diameter} and the \emph{girth} of $K$, respectively.

\begin{cor}\label{cor:girth}
The infimum of the quantities $\frac{\diam(P)}{g(P)}$ over the family of all monostable polyhedra $P$ is $\frac{1}{\pi}$.
\end{cor}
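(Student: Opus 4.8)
The plan is to establish the two inequalities separately: that $\frac{1}{\pi}$ is a lower bound for $\frac{\diam(P)}{g(P)}$ over all monostable polyhedra $P$, and that it is the infimum (i.e. not attained but approached). For the lower bound, I would argue for arbitrary convex bodies, not just monostable polyhedra. Fix a convex body $K \subset \Re^3$ and let $uv$ be a diameter segment, so $\|u-v\| = \diam(K)$. Project $K$ orthogonally onto any plane $\Pi$; the image $K|_\Pi$ is a planar convex body containing the projection of $uv$. The key geometric fact is that for a planar convex body, the perimeter is at least $2$ times its width in any direction, and more precisely $\perim(K|_\Pi) \geq 2\,\diam(K|_\Pi)$ is too weak — instead I want $\perim(K|_\Pi) \geq \pi \cdot w$ where $w$ is... no: the sharp statement is that the perimeter of a convex body is at least $\pi$ times its \emph{minimal} width (this is false) — rather, $\perim \geq 2\,\mathrm{diam}$ always, with equality for a segment, and $\perim \le \pi\,\mathrm{diam}$ (Barbier-type bound). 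So the right direction: $g(K) = \min_\Pi \perim(K|_\Pi) \geq \min_\Pi 2\,\diam(K|_\Pi)$. This does not immediately give $\pi$. Let me reconsider: the correct bound is that any projection of $K$ has diameter at least some function of $\diam(K)$, and we want $g(K) \geq \pi \cdot (\text{something})$. Actually the clean route is: the girth of any convex body satisfies $g(K) \geq 2\,\diam(K) \cdot \frac{\pi}{2}$? I will instead use that $g(K) \ge \pi \cdot w_{\min}(K)$ is wrong but $\perim(C) \ge \pi\, w(C)$ for the \emph{constant-width} comparison fails too.

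Let me restate the intended lower-bound argument cleanly. For a planar convex body $C$, Cauchy's formula gives $\perim(C) = \int_0^\pi w_C(\theta)\,d\theta$ where $w_C(\theta)$ is the width in direction $\theta$. Since $w_C(\theta) \geq \mathrm{(length\ of\ any\ chord\ projected)}$, and the diameter segment of $K$ projects into every projection $K|_\Pi$, we get that $K|_\Pi$ contains a segment of length at least $\diam(K)\cdot|\cos\alpha|$ where $\alpha$ is the angle between $uv$ and $\Pi$. Choosing $\Pi$ parallel to $uv$ (i.e., containing a direction along $uv$), the projection contains a full segment of length $\diam(K)$, so $w_{K|_\Pi}(\theta) \geq \diam(K)\,|\sin(\theta - \theta_0)|$ for appropriate $\theta_0$, whence $\perim(K|_\Pi) = \int_0^\pi w_{K|_\Pi}(\theta)\,d\theta \geq \diam(K)\int_0^\pi |\sin(\theta-\theta_0)|\,d\theta = 2\,\diam(K)$. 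Hmm, that gives $2$, not $\pi$. The factor $\pi$ must come from projecting onto a plane \emph{orthogonal} to $uv$, which is exactly what one should NOT do since that kills the diameter. I suspect the actual bound uses that a monostable (or any) polyhedron has $\diam/g \ge 1/\pi$ because $g(P) \le \pi\,\diam(P)$ always: indeed by Cauchy, $\perim(P|_\Pi) = \int_0^\pi w(\theta)d\theta \le \pi\,\diam(P|_\Pi) \le \pi\,\diam(P)$, so $g(P) \le \pi\,\diam(P)$, giving $\frac{\diam(P)}{g(P)} \ge \frac{1}{\pi}$ for \emph{every} convex body. That is the lower bound, and it requires no monostability at all.

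For the matching upper bound, I would invoke Theorem~\ref{thm:rotational}: for any $\varepsilon>0$ there is a monostable polyhedron $P$ with $d_H(P,\B^3) < \varepsilon$. Hausdorff-closeness to the unit ball forces $\diam(P) \to 2$ and, since the girth is continuous with respect to Hausdorff distance (the perimeter of projections varies continuously, and the minimum of a continuous family does too) and $g(\B^3) = 2\pi$, we get $g(P) \to 2\pi$. Hence $\frac{\diam(P)}{g(P)} \to \frac{2}{2\pi} = \frac{1}{\pi}$, so the infimum is at most $\frac{1}{\pi}$. Combined with the lower bound, the infimum equals $\frac{1}{\pi}$; it is not attained because equality $g(P) = \pi\,\diam(P)$ in Cauchy's formula requires every projection to be a segment, impossible for a $3$-dimensional body. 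The main obstacle is purely bookkeeping: verifying that $g(\cdot)$ is continuous in the Hausdorff metric (uniform convergence of support functions yields uniform convergence of projection perimeters, and the infimum over the compact family of projection directions is then continuous) and that Theorem~\ref{thm:rotational}'s polyhedra indeed have diameter and girth converging to those of $\B^3$ — both routine given the theorem.
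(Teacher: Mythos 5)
Your final argument, once you prune the exploratory false starts, is exactly the paper's proof: the lower bound comes from Cauchy's formula, $\perim(M)=\pi\cdot w(M)\le\pi\cdot\diam(M)\le\pi\cdot\diam(K)$ for every projection $M$ (so this holds for \emph{all} convex bodies, with no use of monostability), and the upper bound follows from Theorem~\ref{thm:rotational} together with the continuity of diameter and girth in the Hausdorff metric and the values $\diam(\B^3)=2$, $g(\B^3)=2\pi$. Same decomposition, same lemmas; the remark that the infimum is not attained is a harmless addition.
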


The proof of Theorem~\ref{thm:rotational} is based on a general theorem on approximation of smooth convex bodies by convex polytopes. 
Before stating it, we briefly introduce some elementary concepts regarding their equilibrium properties. Let $K \subset \Re^3$ be a smooth convex body with the origin as its center of mass, and let $\delta_K : \bd(K) \to \Re$ be the Euclidean distance function measured from $o$, where $\bd(K)$ denotes the boundary of $K$. The critical points of $\delta_K$ are called \emph{equilibrium points} of $K$. To avoid degeneracy, it is usually assumed that $\delta_K$ is a Morse function; i.e. it has finitely many critical points, $\bd(K)$ is twice continuously differentiable at least in a neighborhood of each critical point, and at each such point the Hessian of $\delta_K$ is nondegenerate \cite{Zomorodian}. Depending on the number of negative eigenvalues of the Hessian, we distinguish between \emph{stable}, \emph{unstable} and \emph{saddle-type} equilibrium points, corresponding to the local minima, maxima and saddle points of $\delta_K$, respectively. The Poincar\'e-Hopf Theorem implies that under these conditions, the numbers $S$, $U$ and $H$ of the stable, unstable and saddle points of $K$, respectively, satisfy the equation $S-H+U=2$.

Answering a conjecture of Arnold, Domokos and V\'arkonyi \cite{gomboc} proved that there is a homogeneous convex body with only one stable and one unstable point. They called the body they constructed `G\"omb\"oc' (for more information, see \cite{gomboc_homepage}).
In addition to the existence of G\"omb\"oc, in their paper \cite{gomboc} Domokos and V\'arkonyi proved the existence of a convex body with $S$ stable and $U$ unstable equilibrium points for any $S,U \geq 1$. This investigation was extended in \cite{DLS2} to the combinatorial equivalence classes defined by the Morse-Smale complexes of $\rho_K$, and in \cite{DHL} for transitions between these classes. Based on these results, for any $S, U \geq 1$ we define the set $(S,U)_c$ as the family of smooth convex bodies $K$ having $S$ stable and $U$ unstable equilibrium points, where $K$ has no degenerate equilibrium point, and at each such point $\bd(K)$ has a positive Gaussian curvature. We define the class $(S,U)_p$ analogously for convex polyhedra, where stable and unstable points of a convex polyhedron are defined formally in Section~\ref{sec:prelim}. Our theorem is the following, where we call a convex body \emph{centered} if its center of mass is the origin $o$.

\begin{thm}\label{thm:approx}
Let $\varepsilon > 0$, $S,U \geq 1$ be arbitrary, and let $G$ be any subgroup of the orthogonal group $\O(3)$. Then for any centered, $G$-invariant convex body $K \in (S,U)_c$, there is a centered $G$-invariant convex polyhedron $P \in (S,U)_p$ such that $d_H(K,P) < \varepsilon$.
\end{thm}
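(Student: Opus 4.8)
The plan is to approximate $K$ by polyhedra in two stages: first produce a $G$-invariant polyhedron that is $\varepsilon$-close to $K$ in the Hausdorff metric with the right \emph{combinatorics} of equilibrium points near the old ones, then argue that no spurious equilibria are introduced. The natural starting point is to inscribe in $\bd(K)$ a finite $G$-invariant point set $V$ that is $\delta$-dense (for $\delta$ to be chosen small relative to $\varepsilon$ and to the geometry of $K$), and let $P_0 = \conv(V)$. Density plus $G$-invariance of $V$ can be arranged by taking the orbit under $G$ of a fine net on a fundamental domain, together with the (finitely many) equilibrium points of $K$ themselves and small $G$-invariant clusters around them; since $G \leq \O(3)$ is a group of isometries fixing $o$, $P_0$ is automatically $G$-invariant and centered up to a small error coming from discretization of the mass distribution, which one corrects afterwards by a tiny $G$-equivariant scaling/translation — but translation breaks $G$-invariance unless $o$ is fixed, so instead one notes that by symmetry the center of mass of $P_0$ already lies in the fixed subspace of $G$, and if that subspace is $\{o\}$ we are done, while if it is a line or plane one adds symmetric correction caps. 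This bookkeeping is routine; the essential point is that $P_0$ is $G$-invariant, centered, and $d_H(K,P_0)<\varepsilon$.

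The heart of the argument is controlling the equilibria of $P_0$. For a polyhedron, stable points correspond to faces $F$ whose orthogonal projection of $o$ lands in $\relint F$, unstable points to vertices $v$ such that $o$ projects into the normal cone "from outside" appropriately, and saddle points to edges; I would use exactly the formal definitions promised in Section~\ref{sec:prelim}. Near each nondegenerate equilibrium point $p$ of $K$, since $\bd(K)$ has positive Gaussian curvature there and the Hessian of $\delta_K$ is nondegenerate, $\delta_K$ looks like a nondegenerate quadratic form in local coordinates; for a sufficiently fine net the piecewise-linear function $\delta_{P_0}$ restricted to the part of $\bd(P_0)$ over a neighborhood of $p$ is a small $C^0$-perturbation of this quadratic behaviour, so standard discrete Morse theory / a direct geometric check shows it has exactly one equilibrium of the same type in that neighborhood (a stable face if $p$ was a minimum, an unstable vertex if $p$ was a maximum, a saddle edge if $p$ was a saddle). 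This is where I place the small $G$-invariant clusters of vertices: by choosing them symmetric about $p$ under the stabilizer of $p$ in $G$, one forces the local equilibrium to sit at the symmetric location and to be unique. Away from the equilibria of $K$, $\|\nabla \delta_K\|$ is bounded below by some $c>0$, and for the net fine enough the corresponding one-sided/subgradient condition for $\delta_{P_0}$ is also bounded away from the degenerate configuration, so no face, edge, or vertex of $P_0$ there can be an equilibrium. Hence $P_0$ has exactly $S$ stable, $U$ unstable, and (by Poincaré–Hopf, or by the same local count at the saddles) $S+U-2$ saddle equilibria, i.e. $P_0 \in (S,U)_p$.

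The main obstacle I anticipate is precisely this local-to-global equilibrium count: making rigorous the claim that a fine polyhedral approximation of a strictly convex, nondegenerate piece of $\bd(K)$ contributes exactly one equilibrium of the matching index and the flat regions contribute none. The subtlety is that $\delta_{P_0}$ is only piecewise smooth, so "critical point" must be read in the combinatorial (face/edge/vertex) sense, and one must rule out, e.g., an edge of $P_0$ accidentally becoming a saddle because the projection of $o$ grazes it — this is controlled by the curvature lower bound near equilibria and the gradient lower bound away from them, but quantifying how fine the net must be (in terms of $c$, the curvature bounds, $\diam K$, and the modulus of continuity of the Gauss map) requires care. A secondary technical point is the $G$-equivariant centering correction: one must verify it can be done by a modification of size $o(\varepsilon)$ that neither destroys $G$-invariance nor disturbs the already-fixed local equilibrium picture, which follows because the correction is supported far from the equilibria and is itself $G$-invariant. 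Once these are in hand, renaming $P_0$ as $P$ finishes the proof.
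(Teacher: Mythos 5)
Your proposal has a genuine gap at precisely the step you flagged as ``the main obstacle,'' and the problem is not a matter of quantification to be filled in --- the claim is false. You assert that for a sufficiently fine $G$-invariant net the PL distance function $\delta_{P_0}$ over a neighborhood of a nondegenerate equilibrium $p$ of $K$ has exactly one equilibrium of the matching index. A PL approximation of a smooth function is a small $C^0$-perturbation but never a small $C^1$-perturbation, and a small $C^0$-perturbation of a function with a nondegenerate critical point can have arbitrarily many critical points nearby. The paper explicitly cites the result of Domokos, L\'angi and Szab\'o (the reference \cite{DLS} mentioned right after the statement of Theorem~\ref{thm:approx} in Section~\ref{sec:intro}) that any sufficiently fine polyhedral approximation of a smooth convex body along an equidistant partition has \emph{strictly more} stable, unstable and saddle equilibria than the original in general. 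So ``take a fine enough $G$-invariant net and add a symmetric cluster at each equilibrium'' cannot be made to work by itself; this is exactly the difficulty the theorem has to overcome, not a technicality.

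The paper's proof does not rely on fineness but \emph{constructs} the polyhedral patch near each equilibrium explicitly, and it approximates from outside (intersecting $K$ with half-spaces) rather than from inside (convex hull of boundary points), which makes this local control tractable. Near a saddle it applies Lemma~\ref{lem:conway_lemma} to build a polygonal chain on which the distance to $o$ is \emph{strictly decreasing}, extends it to a convex polyhedral cylinder, and uses Remarks~\ref{rem:forsaddle} and~\ref{rem:monotone} to rule out spurious equilibria on the truncated surface; near an unstable point a rotated polygonal chain gives a conical patch with the same property; near a stable point a single parallel cutting plane suffices. Away from all equilibria, a compactness argument (Step 3) shows one can truncate without creating new equilibria --- this part does match your ``gradient bounded below'' intuition and is essentially right. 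Finally, the centering is handled not by $G$-equivariant scaling or caps but by four (or a $G$-orbit of) controllable cone families $C_i(\tau_i)$ attached over the controlling regions; their first moment is a map onto a neighborhood of $o$, adjusted at the end via Lemma~\ref{lem:approximation}. Your ``symmetric correction caps'' idea would need an analogous quantitative surjectivity statement to be believed. In short, the global architecture you sketch (local control at equilibria, separate center-of-mass correction, $G$-equivariance throughout) matches the paper's, but the local step is the crux, and your version of it does not work.
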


Here we note that the fact that any nondegenerate convex polyhedron can be approximated arbitrarily well by a smooth convex body with the same number of equilibrium points is regarded as `folklore' (we use a simple argument to show it in Section~\ref{sec:prelim}). On the other hand, it is shown in \cite{DLS} that any sufficiently fine approximation of a smooth convex body $K$ by a convex polyhedron $P$, using an equidistant partition of the parameter range of the boundary of $K$, has strictly more stable, unstable and saddle points in general than the corresponding quantities for $K$.

Even though the convex body constructed in \cite{gomboc} is not $C^2$-class at its two equilibrium points, in \cite{DLS2} it is shown that class $(1,1)_c$ is not empty.
Thus, Theorem~\ref{thm:approx} readily implies the existence of a polyhedron in class $(1,1)_p$.

\begin{cor}\label{cor:gomboc}
There is a convex polyhedron with a unique stable and a unique unstable point.
\end{cor}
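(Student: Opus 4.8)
The plan is to read off Corollary~\ref{cor:gomboc} from Theorem~\ref{thm:approx} applied with $S=U=1$, with $G=\{I\}$ the trivial subgroup of $\O(3)$, and with $\varepsilon$ any positive number. With these choices Theorem~\ref{thm:approx} asserts that from any centered convex body $K\in(1,1)_c$ one obtains a centered convex polyhedron $P\in(1,1)_p$, that is, a convex polyhedron (with the stability notions of Section~\ref{sec:prelim}) having exactly one stable and exactly one unstable equilibrium point. That is precisely the statement of the corollary, so the only thing left to arrange is the existence of a suitable input body $K$.

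For this I would invoke the result of \cite{DLS2}, quoted above, that the class $(1,1)_c$ is non-empty; fix some $K_0\in(1,1)_c$. Since the equilibrium points of a convex body, the nondegeneracy of the Hessian of its radial distance function at these points, and the Gaussian curvature of its boundary are all unchanged by a rigid translation — such a translation alters neither the position of $\bd(K_0)$ relative to the center of mass nor the local geometry of $\bd(K_0)$ — the translate $K$ of $K_0$ whose center of mass is the origin again lies in $(1,1)_c$. This $K$ is centered, and trivially $\{I\}$-invariant, hence a legitimate input for Theorem~\ref{thm:approx}; applying the theorem then completes the argument.

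I do not expect any real obstacle in this last step: the entire weight of the corollary rests on Theorem~\ref{thm:approx}, and the proof is just the specialization above. If one prefers a symmetric example, the same reasoning shows that for any subgroup $G\leq\O(3)$ admitting a centered $G$-invariant body in $(1,1)_c$ there is a $G$-invariant convex polyhedron in $(1,1)_p$; but for the bare existence assertion the trivial group is enough.
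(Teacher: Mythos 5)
Your proposal is correct and follows exactly the paper's intended derivation: the paper also notes that $(1,1)_c$ is nonempty by \cite{DLS2} and then applies Theorem~\ref{thm:approx} to obtain a polyhedron in $(1,1)_p$. The translation remark is a harmless bit of bookkeeping (membership in $(S,U)_c$ is defined via the distance function from the center of mass, which is translation-invariant), so there is nothing to add.
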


Furthermore, we remark that the elegant construction in the paper \cite{dumitrescu} of Dumitrescu and T\'oth yields an \emph{inhomogeneous} monostable convex polyhedron arbitrarily close to a sphere. Nevertheless, we must add that dropping the requirement of uniform density may significantly change the equilibrium properties of a convex body. To show it we recall the construction of Conway (see \cite{DawsonFinbow}) of an inhomogeneous monostable tetrahedron in $\Re^3$, and observe that spheres with inhomogeneous density, as also roly-poly toys, yield trivial solutions to Arnold's conjecture.

For completeness, we also recall the remarkable result of Zamfirescu \cite{Zamfirescu} stating that a typical convex body (in Baire category sense) has infinitely many equilibrium points, and note that critical points of the distance function from another point are examined in Riemannian manifolds, e.g. in \cite{BaranyZamfirescu,Garcia,IVZ}.

In Section~\ref{sec:prelim}, we introduce our notation and collect the necessary tools for proving Theorems~\ref{thm:rotational} and \ref{thm:approx}, including more precise definitions for some of the concepts mentioned in Section~\ref{sec:intro}. In Section~\ref{sec:proof1} we prove Theorem~\ref{thm:rotational} and Corollary~\ref{cor:girth}. In Section~\ref{sec:proof2} we prove Theorem~\ref{thm:approx}. Finally, in Section~\ref{sec:remarks} we collect some additional remarks and ask some open questions.

\section{Preliminaries}\label{sec:prelim}

In the paper, for any $p,q \in \Re^3$, we denote by $[p,q]$ the closed segment with endpoints $p,q$, and by $|p|$ the Euclidean norm of $p$. We denote the closed $3$-dimensional unit ball centered at the origin $o$ by $\B^3$, and its boundary by $\Sph^2$. Furthermore, for any set $S \subset \Re^3$ we let $\conv (S)$ denote the convex hull of $S$. By a convex body we mean a compact, convex set with nonempty interior.

Let $K \subset \Re^3$ be a convex body. The \emph{center of mass} $c(K)$ of $K$ is defined by the fraction $c(K)=  \frac{1}{\vol(K)} \int_{x \in K} x \, d v$, where $v$ denotes $3$-dimensional Lebesgue measure. We remark that the integral in this definition is called the \emph{first moment} of $K$, and note that we clearly have $c(K) \in \inter (K)$ for any convex body $K$. If $q \in \bd(K)$ satisfies the property that the plane through $q$ and orthogonal to the vector $q-c(K)$ supports $K$, then we say that $q$ is an \emph{equilibrium point} of $K$. Here, if $K$ is smooth, then the equilibrium points of $K$ coincide with the critical points of the Euclidean distance function measured from $c(K)$ and restricted to $\bd (K)$.
We remark that a convex body $K \subset \Re^3$ is called \emph{smooth} if for any boundary point $x$ of $K$ there is a unique supporting plane of $K$ at $q$; this property coincides with the property that $\bd(K)$ is  a $C^1$-class submanifold of $\Re^3$ (cf. \cite{Schneider}).

We define nondegenerate equilibrium points only in two special cases. If $K$ is smooth, $q \in \bd(K)$ is an equilibrium point of $K$ with a $C^2$-class neighborhood in $\bd(K)$, and the Hessian of the Euclidean distance function on $\bd(K)$, measured from $c(K)$, is nondegenerate, we say that $q$ is \emph{nondegenerate}. In this case $q$ is called a \emph{stable, saddle-type} or \emph{unstable} point of $K$ if the number of the negative eigenvalues of the Hessian at $q$ is $0,1$ or $2$, respectively \cite{DLS2}. Consider now the case that $K$ is a convex polyhedron in $\Re^3$, and $q \in \bd(K)$ is an equilibrium point of $K$. Then there is a unique vertex, edge or face of $K$ that contains $q$ in its relative interior, where by the relative interior of a vertex we mean the vertex itself. Let $F$ denote this vertex, edge or face, and let $H$ be the supporting plane of $K$ through $q$ that is perpendicular to $q-c(K)$. Observe that $F \subset K \cap H$. We say that $q$ is \emph{nondegenerate} if $F= K \cap H$. In this case we call $q$ a \emph{stable, saddle-type} or \emph{unstable} point of $K$ if the dimension of $F$ is $2,1$ or $0$, respectively \cite{DKLRV}. In both the smooth and the polyhedral cases $K$ is called nondegenerate if it has only finitely many equilibrium points, and each such point is nondegenerate; note that the first condition is automatically satisfied for convex polyhedra. We remark that in the above definitions, we may replace the center of mass of $K$ by any fixed reference point $c \in K$. In this case we write about equilibrium points \emph{relative to $c$}. We emphasize that in the paper, unless it is stated otherwise, if the reference point is not specified, then it is meant to be the center of mass of the body.

Let $K \in \Re^3$ be a nondegenerate smooth convex body with $S$ stable, $H$ saddle-type and $U$ unstable equilibrium points. Using a standard convolution technique, we may assume that $K$ has a $C^{\infty}$-class boundary, and hence, by the Poincar\'e-Hopf Theorem, we have $S-H+U=2$ \cite{DLS2}. We show that the same holds if $K$ is a nondegenerate convex polyhedron. Indeed, let $\tau > 0$ be sufficiently small, and set $K(\tau) = (K \div (\tau \B^3)) + (\tau \B^3)$, where $\div$ denotes Minkowski difference and $+$ denotes Minkowski addition \cite{Schneider}. Then for any $\tau > 0$, $K(\tau)$ is a smooth nondegenerate convex body having the same numbers of stable, saddle-type and unstable points relative to $c(K)$; hence, we may apply the Poincar\'e-Hopf Theorem for $K(\tau)$ (here we note that by Lemma~\ref{lem:approximation} the same property holds relative to $c(K(\tau))$ as well). Thus, for any nondegenerate convex body, the numbers of stable and unstable points determine the number of saddle-type points. We define class $(S,U)_c$ as the family of nondegenerate, smooth convex bodies $K \subseteq \Re^3$ with $S$ stable, $U$ unstable points with the additional assumption that at each equilibrium point of $K$, the principal curvatures of $\bd(K)$ are positive. Similarly, by $(S,U)_p$ we mean the family of nondegenerate convex polyhedra with $S$ stable and $U$ unstable points. Observe that if $K$ is nondegenerate, the point of $\bd(K)$ closest to or farthest from $c(K)$ is necessarily a stable or unstable point, respectively, implying that the numbers $S,U$ in the above symbol are necessarily positive.

For the following remark, see Lemma 7 from \cite{DLS2}.

\begin{rem}\label{rem:stability}
Let $K \in (S,U)_c$ and for any equilibrium point $q$ of $K$, let $V_q$ be an arbitrary compact neighborhood of $q$ containing no other equilibrium point of $K$. Then $c(K)$ has an open neighborhood $U$ such that for any $x \in U$, $K$ has $S$ stable and $U$ unstable points relative to $x$, and for any equilibrium point $q$ of $K$ relative to $c(K)$, $V_q$ contains exactly one equilibrium point of $K$ relative to $x$, and the type of this point is the same as the type of $q$.
\end{rem}

For Remark~\ref{rem:curvature}, see the paragraph in \cite{DLS} after Definition 2.

\begin{rem}\label{rem:curvature}
Let $q$ be an equilibrium point of a centered convex body $K$ in $(S,U)_c$ for some $S,U \geq 1$. Let $| q | = \rho$, and let $\kappa_1, \kappa_2$ denote the principal curvatures of $\bd (K)$ at $q$. Then $\kappa_1, \kappa_2 \neq \frac{1}{\rho}$. Furthermore, $0 \leq \kappa_1, \kappa_2 < \frac{1}{\rho}$ if and only if $q$ is a stable point, $\kappa_1, \kappa_2 > \frac{1}{\rho}$ if and only if $q$ is an unstable point, and $0 < \min \{ \kappa_1, \kappa_2 \} < \frac{1}{\rho} < \max \{ \kappa_1, \kappa_2 \}$ if and only if $q$ is a saddle-type equilibrium point.
\end{rem}

\begin{lem}\label{lem:discrete}
The symmetry group of any nondegenerate convex body $K$ is finite.
\end{lem}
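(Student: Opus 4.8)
The plan is to show that the symmetry group $G = \Sym(K)$ of a nondegenerate convex body $K$ is both compact and discrete, hence finite. Compactness is immediate: $G$ is a closed subgroup of $\O(3)$, since every symmetry of $K$ fixes its center of mass, which we may place at the origin, so $G \leq \O(3)$, and $\O(3)$ is compact. Thus it suffices to prove that $G$ is discrete, i.e. that the identity is isolated in $G$; a compact group with an isolated identity is finite (the cosets of a sufficiently small identity neighborhood give a finite open cover).

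To prove discreteness, I would argue by contradiction. Suppose $G$ is not discrete. Then $G$ contains a sequence of elements $g_k \to \mathrm{id}$ with $g_k \neq \mathrm{id}$, and the closure of the subgroup they generate is a nontrivial closed, hence compact, subgroup of $\O(3)$ of positive dimension. Every positive-dimensional closed subgroup of $\O(3)$ contains a one-parameter subgroup, i.e. a rotation subgroup $\{R_t : t \in \Re\}$ about some axis $\ell$ through $o$, with $R_t \in G$ for all $t$. Now pick an equilibrium point $q$ of $K$; since $K$ is nondegenerate it has at least one (in fact a stable and an unstable one). The whole circle $\{R_t q : t \in \Re\}$ consists of points of $\bd(K)$ all at distance $|q|$ from $o=c(K)$, and since equilibrium is a metric condition relative to $c(K)$ invariant under isometries fixing $c(K)$, every point of this circle is again an equilibrium point of $K$. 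If $q$ does not lie on $\ell$, this circle is nondegenerate and gives infinitely many equilibrium points, contradicting nondegeneracy. So the only possibility is that every equilibrium point of $K$ lies on $\ell$.

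It remains to rule out the case that all equilibrium points lie on the rotation axis $\ell$. Here I would use that $\bd(K)$ is invariant under all the rotations $R_t$, so $K$ is a body of revolution about $\ell$. Let $\ell \cap \bd(K) = \{a, b\}$. For a body of revolution, every boundary point of the ``equator'' type — more precisely, consider the function $t \mapsto \delta_K$ restricted to a meridian curve; its critical points other than the poles $a,b$ sweep out full circles of equilibrium points under the rotation, again contradicting finiteness unless there are none. But a body of revolution whose only equilibrium points are the two poles $a, b$ would have $S + U \le 2$ with $H = 0$, forcing $S = U = 1$; yet one checks directly (or invokes that a monotone profile would make $K$ not a body with interior, or that $\delta_K$ on a meridian arc from $a$ to $b$ must have an interior critical point whenever $a,b$ are both critical, producing a saddle circle) that this is impossible for an actual convex body of revolution. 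In any case all branches yield infinitely many equilibrium points, contradicting nondegeneracy. Therefore $G$ is discrete, and being also compact, it is finite.

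The main obstacle is the last step: cleanly arguing that a convex body of revolution cannot have \emph{only} the two polar equilibrium points without producing a circle of saddle-type points. The cleanest route is probably to note that along a meridian arc joining the two poles $a$ and $b$, the distance function $\delta_K$ from $o$ attains a maximum at one pole and a minimum at the other only if it is monotone along the arc, which is geometrically impossible for the boundary of a bounded convex body with $o$ in its interior (the profile curve must ``bulge out'' away from $\ell$), so there is an interior critical point on the meridian; rotating it yields a whole circle of equilibrium points, contradicting nondegeneracy. One must be a little careful that this interior critical point is a genuine equilibrium point of $K$ in the $3$-dimensional sense (the supporting plane condition), but this follows because at such a point the meridian is horizontal and, by rotational symmetry, the tangent plane to $\bd(K)$ there is orthogonal to the radius vector.
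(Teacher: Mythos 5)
Your reduction to the rotationally symmetric case is essentially the same as the paper's: $G$ is a closed, hence compact, subgroup of $\O(3)$; if it were infinite it would be positive-dimensional and contain a one-parameter rotation subgroup about some axis $\ell$; and any equilibrium point off $\ell$ would then sweep out a whole circle of equilibria, contradicting nondegeneracy, so all equilibria lie on $\ell$ and $K$ would have $S=U=1$, $H=0$. At this point the paper stops and cites Conway's result (reported by Guy in \cite{goldberg} and recalled in the introduction) that no body of revolution can be monostable. You instead attempt to prove this fact from scratch, and that attempt has a genuine gap.

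The flawed step is the assertion that $\delta_K$ cannot be monotone along a meridian arc because ``the profile curve must bulge out away from $\ell$.'' This confuses the radial distance from the axis, $r=f(z)$, with the distance $\delta_K(z)=\sqrt{f(z)^2+z^2}$ from the reference point $o$: the profile $f$ does go from $0$ up to a positive maximum and back to $0$, but that in no way forces $\delta_K$ to have an interior critical point. Concretely, let $K$ be the unit ball centered at $(0,0,h)$ with $0<h<1$ and take $o$ to be the origin on the axis; then on $\bd(K)$ one has $\delta_K^2 = r^2+z^2 = 1-(z-h)^2+z^2 = (1-h^2)+2hz$, which is strictly monotone in $z$ even though the profile bulges out, and the only two equilibria relative to $o$ are the poles. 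This is not a counterexample to Conway's theorem, because here $o \neq c(K)$; the point is precisely that Conway's result depends essentially on the reference point being the \emph{center of mass}, a fact your geometric argument never uses and therefore cannot establish. To close your proof you would either have to cite Conway's result, as the paper does, or give a genuine proof of it that exploits $o=c(K)$.
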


\begin{proof}
Let $K$ be a nondegenerate convex body with symmetry group $G$. Without loss of generality, assume that $K$ is centered, i.e. $c(K)=o$.
Since $c(K)$ is clearly a fixed point of any symmetry in $G$, we have that $G$ is a subgroup of the orthogonal group $\O(3)$.
Clearly, $G$ is closed in $\O(3)$, and thus, it is a Lie group embedded in $\O(3)$ by Cartan's Closed Subgroup Theorem. On the other hand, the Lie subgroups of $\O(3)$ are well known, and in particular we have that if $G$ is infinite, then it contains, up to conjugacy, $\mathcal{SO}(2)$ as a subgroup. In other words, $K$ is rotationally symmetric. Thus, by nondegeneracy, $K$ has exactly one stable and one unstable equilibrium point. But this property contradicts Conway's result mentioned in Section~\ref{sec:intro} that no rotationally symmetric convex body is monostable.
\end{proof}

We finish Section~\ref{sec:prelim} with two lemmas and two remarks, where $X \triangle Y$ denotes the symmetric difference of the sets $X, Y$.

\begin{lem}\label{lem:approximation}
Let $K(\tau) \subset \Re^3$ be a $1$-parameter family of convex bodies, where $\tau \in [0,\tau_0]$ for some $\tau_0 > 0$. For any $\tau \in [0,\tau_0]$, let $c(\tau)$ denote the center of mass of $K(\tau)$, and let $K=K(0)$ and $c=c(0)$. Assume that for some $C > 0$ and $m > 0$, $\vol(K(\tau) \triangle K) \leq C \tau^m$ holds for any sufficiently small value of $\tau$. Then there is some $C' > 0$ such that $|c(\tau)-c| \leq C' \tau^m$ holds for any sufficiently small value of $\tau$.
\end{lem}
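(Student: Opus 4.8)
The plan is to bound the difference of the first moments and the difference of the volumes separately, and then combine them. Write $M(\tau) = \int_{x \in K(\tau)} x \, dv$ and $M = \int_{x \in K} x \, dv$ for the first moments, so that $c(\tau) = M(\tau)/\vol(K(\tau))$ and $c = M/\vol(K)$. First I would observe that all the bodies $K(\tau)$ for $\tau$ small lie in a fixed bounded region: indeed $\vol(K(\tau) \triangle K) \le C\tau^m \to 0$ forces, for $\tau$ small, $\vol(K(\tau)) \ge \frac{1}{2}\vol(K) > 0$, and a standard compactness/boundedness argument (a convex body cannot have small symmetric difference with $K$ while poking far outside a large ball, since a long thin convex body contains a large volume near $K$ only if it is not long and thin) shows there is a fixed ball $R\B^3$ with $K(\tau) \subseteq R\B^3$ for all small $\tau$. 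Let me just take this bound $R$ as given for small $\tau$; it is where a little care is needed but it is routine.

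Next I would estimate $|M(\tau) - M|$. We have
\begin{equation*}
M(\tau) - M = \int_{K(\tau)} x\,dv - \int_{K} x\,dv = \int_{K(\tau)\setminus K} x\,dv - \int_{K\setminus K(\tau)} x\,dv,
\end{equation*}
and since $|x| \le R$ on both regions of integration,
\begin{equation*}
|M(\tau) - M| \le R\,\vol(K(\tau)\setminus K) + R\,\vol(K\setminus K(\tau)) = R\,\vol(K(\tau)\triangle K) \le R C \tau^m.
\end{equation*}
Similarly $|\vol(K(\tau)) - \vol(K)| \le \vol(K(\tau)\triangle K) \le C\tau^m$.

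Finally I would combine these via the identity
\begin{equation*}
c(\tau) - c = \frac{M(\tau)}{\vol(K(\tau))} - \frac{M}{\vol(K)} = \frac{M(\tau) - M}{\vol(K(\tau))} + M\left(\frac{1}{\vol(K(\tau))} - \frac{1}{\vol(K)}\right),
\end{equation*}
and bound the second term using
\begin{equation*}
\left|\frac{1}{\vol(K(\tau))} - \frac{1}{\vol(K)}\right| = \frac{|\vol(K(\tau)) - \vol(K)|}{\vol(K(\tau))\vol(K)} \le \frac{C\tau^m}{\frac{1}{2}\vol(K)^2}.
\end{equation*}
Since $\vol(K(\tau)) \ge \frac{1}{2}\vol(K)$ and $|M| \le R\vol(K)$ are all bounded by constants depending only on $K$, collecting terms gives $|c(\tau) - c| \le C'\tau^m$ with
\begin{equation*}
C' = \frac{2RC}{\vol(K)} + \frac{2RC}{\vol(K)} = \frac{4RC}{\vol(K)},
\end{equation*}
valid for all sufficiently small $\tau$. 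The only genuinely nonroutine point is establishing the uniform bound $K(\tau) \subseteq R\B^3$; everything else is the triangle inequality and the elementary fact $\vol(A\setminus B) + \vol(B\setminus A) = \vol(A\triangle B)$.
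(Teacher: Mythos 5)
Your proof is correct and follows essentially the same route as the paper's: assume a uniform bound $K(\tau) \subseteq R\B^3$, bound $|\vol(K(\tau))-\vol(K)|$ and the first-moment difference by $C\tau^m$ and $RC\tau^m$ respectively, and combine using $\vol(K)>0$. The paper simply asserts the uniform containment without comment, whereas you flag it as the one point needing care and sketch a justification; otherwise the arguments coincide.
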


\begin{proof}
Without loss of generality, we may assume that $K(\tau) \subseteq r \B^3$ for some suitable value of $r > 0$ if $\tau$ is sufficiently small.
By definition, $c(\tau) =  \frac{\int_{x \in K(\tau)} x \, d v}{\vol(K(\tau))}$. On the other hand, by the conditions, we have $|\vol(K(\tau))-\vol(K)| \leq C \tau^m$, and $|\int_{x \in K(\tau)} x \, d \lambda - \int_{x \in K} x \, d v | \leq r C \tau^m$ for all sufficiently small values of $\tau$. From these inequalities and the fact that $\vol(K) > 0$, the assertion readily follows.
\end{proof}

\begin{lem}\label{lem:conway_lemma}
Let $p \in \inter (\B^2) \subset \Re^2$ and $q \in \Sph^1$ such that $p$, $q$ and $o$ are not collinear, and let $L$ be a line through $p$ such that $L$ does not separate $o$ and $q$.
Furthermore, if $A$ denotes the convex angular region with $q \in A$ and bounded by a half line of $L$ starting at $p$, and the half line starting at $p$ and containing $o$, then assume that the angle of $A$ is obtuse. Then there is a convex polygon $Q \subset \B^2$ with vertices $o, x_0=q, x_1, \ldots, x_k=p$ in cyclic order in $\bd (Q)$ such that $x_{i-1}x_io \angle > \frac{\pi}{2}$ for all values of $i$, and $L$ supports $Q$.
\end{lem}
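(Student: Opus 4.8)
The plan is to realise the vertices $x_0=q,x_1,\dots,x_k=p$ as a sufficiently fine polygonal inscription of a carefully chosen smooth, strictly convex arc $\gamma$ joining $q$ to $p$, and to reduce every requirement on $Q$ to a short list of properties of $\gamma$.

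First I would reformulate the angle condition: since $o$ is the origin, for $q',q''\in\Re^2\setminus\{o\}$ with $q'\neq q''$ one has $q'q''o\angle>\frac{\pi}{2}$ iff $(q'-q'')\cdot(-q'')<0$, i.e.\ iff $q''\cdot q'>|q''|^2$, i.e.\ iff $q''$ lies in the open disc having $[o,q']$ as a diameter; in particular this forces $|q''|<|q'|$, so along the chain the distances $|x_i|$ must strictly decrease, which is exactly what makes a radially monotone arc the natural object. Next I would normalise coordinates so that $o$ is the origin and $p=(a,0)$ with $a=|p|\in(0,1)$; then $[p,o\rangle$ has direction $(-1,0)$, the angle $\alpha$ of $A$ lies in $(\frac{\pi}{2},\pi)$, and, after possibly reflecting in the line through $o$ and $p$, the bounding half-line of $A$ contained in $L$ has direction $\pi-\alpha$. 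From $q\in A$ and the non-collinearity of $p,q,o$ one gets that $q$ lies in the open upper half-plane, say $q=(\cos\phi_q,\sin\phi_q)$ with $\phi_q\in(0,\pi)$; from the shape of $A$ one gets $o\notin L$, so $o$ lies in the interior of the closed half-plane $L^+$ bounded by $L$ and containing $q$; and $L$ has the polar equation $r\sin(\phi+\alpha)=a\sin\alpha$, so that a point with polar coordinates $(r,\phi)$ lies in $L^+$ iff $r\sin(\phi+\alpha)\le a\sin\alpha$ — in particular $q\in L^+$ reads $\sin(\phi_q+\alpha)\le a\sin\alpha$.

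The heart of the matter is the construction of $\gamma$. I would take $\gamma$ to be the polar graph $r=h(\phi)$, $\phi\in[0,\phi_q]$, of a smooth function $h$ with $h(0)=a$, $h(\phi_q)=1$, and $h'>0$ on $[0,\phi_q]$ (so $\gamma$ is radially strictly monotone, decreasing from $q$ to $p$), satisfying $h(\phi)\sin(\phi+\alpha)\le a\sin\alpha$ throughout (i.e.\ $\gamma\subseteq L^+$), and satisfying $h^2+2(h')^2-hh''\ge 0$ (i.e.\ the region $\Omega:=\{(r,\phi):0\le r\le h(\phi),\ 0\le\phi\le\phi_q\}$ is convex, its cone angle at the vertex $o$ being $\phi_q<\pi$). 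Such an $h$ exists: the ``ceiling'' $\phi\mapsto a\sin\alpha/\sin(\phi+\alpha)$ is the line $L$ itself — the degenerate flat case of the convexity inequality — it equals $a$ at $\phi=0$ with the \emph{positive} slope $-a\cot\alpha>0$ there (this is precisely where $\alpha>\frac{\pi}{2}$ is used), and it leaves room at $\phi=\phi_q$ because $q\in L^+$; hence one can take $h$ to be an explicit increasing strictly convex function lying weakly below the ceiling, touching it only at $\phi=0$, reaching $1$ at $\phi_q$ with a mild slope, and with both corners of $\Omega$ (where $\gamma$ meets the radial segments $[o,p]$ and $[o,q]$) of interior angle $<\pi$. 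I expect the bulk of the work to be here: (i) the simultaneous feasibility of all these constraints on $h$ over the whole range $[0,\phi_q]$, in particular the change of regime at $\phi=\pi-\alpha$ beyond which the $L^+$-constraint is vacuous; and (ii) a few degenerate subcases ($q\in L$, $L\perp[o,p]$, $\phi_q$ close to $\pi$), to be dealt with by small perturbations or by direct ad hoc choices of $h$.

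Finally I would pass from $\gamma$ to $Q$. Choose $x_0=q,x_1,\dots,x_k=p$ on $\gamma$ at polar angles $\phi_q=\phi_0>\phi_1>\dots>\phi_k=0$ forming a partition of mesh at most $\eta$, and set $Q:=\conv(\{o,x_0,\dots,x_k\})$. Because $\Omega$ is convex with $o$ a vertex, edges $[o,q]$ and $[o,p]$, and $\gamma$ its remaining boundary arc, and because $\gamma$ is strictly convex (so no three of the listed points are collinear), for \emph{every} $\eta$ the points $o,x_0,\dots,x_k$ are exactly the vertices of $Q$, occurring in this cyclic order on $\bd(Q)$. Also $Q\subseteq\Omega\subseteq\B^2\cap L^+$ — each boundary piece of $\Omega$ lies in the convex set $\B^2\cap L^+$, using $h<1$ on $[0,\phi_q)$ together with the ceiling inequality — so $Q\subset\B^2$ and, since $p\in Q\cap L$, the line $L$ supports $Q$. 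It remains to verify $x_{i-1}x_io\angle>\frac{\pi}{2}$ for $i=1,\dots,k$; by the reformulation this amounts to $h(\phi_{i-1})\cos(\phi_{i-1}-\phi_i)>h(\phi_i)$, and a first-order Taylor expansion gives $h(\phi_{i-1})\cos(\phi_{i-1}-\phi_i)-h(\phi_i)=h'(\phi_i)(\phi_{i-1}-\phi_i)+O\big((\phi_{i-1}-\phi_i)^2\big)\ge(\phi_{i-1}-\phi_i)\big(\min_{[0,\phi_q]}h'-C\eta\big)$, with $C$ depending only on $\max_{[0,\phi_q]}(|h|,|h'|,|h''|)$; this is positive once $\eta<\min_{[0,\phi_q]}h'/C$, uniformly in $i$ (the case $i=k$, where $\phi_i=0$, being exactly where $h'(0)>0$ enters). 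Fixing such an $\eta$ completes the proof.
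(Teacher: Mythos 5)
Your overall strategy — build a smooth strictly convex arc $\gamma$ from $q$ to $p$ in polar form $r=h(\phi)$ with $h'>0$, trapped in $\B^2\cap L^+$, and then inscribe a sufficiently fine polygon — is a genuinely different route from the paper's. The paper constructs the polygonal chain \emph{directly}: normalizing $q=(1,0)$ and $\beta=poq\angle$, it sets $x_i'=(r_i\cos\tfrac{i\beta}{k},\,r_i\sin\tfrac{i\beta}{k})$ with $r_i=\cos^i\tfrac{\beta}{k}$, so that each $x_i'$ lies on the Thales circle of $[o,x_{i-1}']$ and the angles are exactly $\tfrac{\pi}{2}$; it then uses $\cos^k\tfrac{\beta}{k}\to 1$ to get $|x_k'|>|p|$ for large $k$, shrinks the $r_i$ slightly to reach $p$ and make the angles strictly obtuse, and finally intersects with the $L^+$ half-plane if $L$ fails to support. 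Your reduction to the differential inequalities $h'>0$, $h\sin(\phi+\alpha)\le a\sin\alpha$, $h^2+2(h')^2-hh''\ge 0$ is correct, and the reformulation via Thales disks, the polar equation of $L$, and the discretization estimate are all sound; the observation that $\alpha>\tfrac{\pi}{2}$ is exactly what makes the slope $-a\cot\alpha$ of the ceiling positive at $\phi=0$ pinpoints correctly where the obtuseness hypothesis enters.

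However, there is a genuine gap at what you yourself flag as ``the bulk of the work'': the existence of $h$ is asserted, not proved. This is not a routine detail. The constraints interact nontrivially: because $h(0)=\psi(0)=a$ and $h\le\psi$, you are forced to have $h'(0)\le -a\cot\alpha$, and an innocuous explicit candidate such as the logarithmic spiral $h(\phi)=a e^{c\phi}$ with $c=-\tfrac{\ln a}{\phi_q}$ (which does satisfy the convexity inequality automatically) fails the ceiling constraint whenever $-\tfrac{\ln a}{\phi_q} > -\cot\alpha$. So one really must design $h$ to follow $\psi$ near $\phi=0$ and then detach, while keeping the polar-curvature inequality and $h'>0$ throughout, including across the regime change at $\phi=\pi-\alpha$ and in the boundary cases ($q\in L$, $\phi_q$ near $\pi$). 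Until that construction is actually exhibited, the proof is incomplete. By contrast the paper sidesteps this entirely: its spiral of Thales points automatically has strictly decreasing radius and obtuse angles after a small perturbation, and the interaction with $L$ is deferred to a single truncation at the end — a considerably shorter path to the same conclusion.
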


\begin{figure}[ht]
\begin{center}
\includegraphics[width=0.35\textwidth]{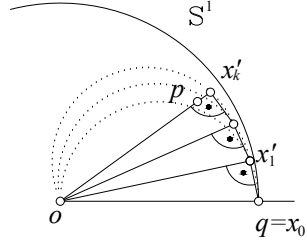}
\caption{The construction of the points $x_i'$ in the proof of Lemma~\ref{lem:conway_lemma}. The dotted curves indicate arcs in the Thales circles of the segments $[o,x_i']$.}
\label{fig:conway_lemma}
\end{center}
\end{figure}

We remark that the conditions in Lemma~\ref{lem:conway_lemma} imply that the Euclidean distance function $x \mapsto |x|$, $x \in \Re^2$ strictly decreases along the curve $\bigcup_{i=1}^k [x_{i-1},x_i]$ from $q$ to $p$. 

\begin{proof}
Without loss of generality, we may assume that $q=(1,0)$ and the $y$-coordinate of $p$ is positive. Set $poq \angle = \beta \in (0,\pi)$, and choose an arbitrary positive integer $k$. For any $i=0,1,\ldots,k$, define the point $x_i' = \left( r_i \cos \frac{i \beta}{k}, r_i \sin \frac{i \beta}{k} \right)$, where $r_i = \cos^{i} \frac{\beta}{k}$. Then $x'_0=q$, and $x'_i$ is on the Thales circle of the segment $[0,x'_{i-1}]$, and thus, $x_{i-1}'x_i'o \angle = \frac{\pi}{2}$ (cf. Figure~\ref{fig:conway_lemma}) for all $i=1,2,\ldots, k$. Using elementary calculus, we obtain that $\lim_{k \to \infty} \cos^k \frac{\beta}{k} = 1$, which yields that there is some value of $k$ such that $|x'_k| > |p|$. Since $x'_k$ and $p$ are on the same half line, we may decrease the values of $r_i$ for $i=1,2,\ldots, k$ slightly such that for the points $x_i$ obtained in this way the convex polygon $Q=\conv \{ o, x_0, x_1, \ldots, x_k \}$ satisfies the required conditions apart from the one for $L$.
Now, if $L$ supports $Q$, we are done. On the other hand, if $L$ does not support $Q$, then we may take the polygon obtained as the intersection of $Q$ and the closed half plane bounded by $L$ and containing $o$ in its interior.
\end{proof}

\begin{rem}\label{rem:forsaddle}
Let $a, b > 0$, where $a \neq b$, and let $E \subset \Re^2$ be the ellipse with equation $\frac{x^2}{a^2} + \frac{y^2}{b^2} \leq 1$. Then, for any $\delta > 0$ there is some $\varepsilon > 0$ such that if $K \subset \Re^2$ is a plane convex body satisfying $E \subseteq K \subseteq (1+\delta)E$, and the vector $w$ is perpendicular to a supporting line of $K$ through $w \in \bd (K)$, then the angle between $w$ and the $x$-axis or the $y$-axis is at most $\delta$.
\end{rem}

\begin{rem}\label{rem:monotone}
Let $f,g$ be two real functions defined in a neighborhood of $a \in \Re$. If $f,g$ are both locally strictly increasing (resp., decreasing) at $a$, then so are $\min \{ f,g \}$ and $\max \{ f,g\}$.
\end{rem}

Finally, we remark that in the proof of Theorem~\ref{thm:approx}, we use ideas also from \cite{DLS2,DHL,dumitrescu}.

\section{Proofs of Theorem~\ref{thm:rotational} and Corollary~\ref{cor:girth}}\label{sec:proof1}

First, we show how Theorem~\ref{thm:approx} implies Theorem~\ref{thm:rotational}.

Let $n \geq 3$ be a positive integer and let $\varepsilon > 0$ be a sufficiently small fixed value. By Theorem~\ref{thm:approx}, it is sufficient to construct a smooth convex body $K \in (1,m)_c$ for some value of $m$ with $n$-fold rotational symmetry and satisfying $d_H(K,\B^3) \leq \varepsilon$.
Let $P$ be a regular $n$-gon inscribed in a fixed circle $C$ on $\B^3$ parallel to, but not contained in the $(x,y)$-plane. Let the vertices of $P$ be $p_i$, $i=1,2,\ldots,n$. Let $Q (\varepsilon) = \conv \left( \B^3 \cup \{ (1+\varepsilon)p_1, \ldots, (1+\varepsilon)p_n \} \right)$ (cf. Figure~\ref{fig:thm1}).

\begin{figure}[ht]
\begin{center}
\includegraphics[width=0.3\textwidth]{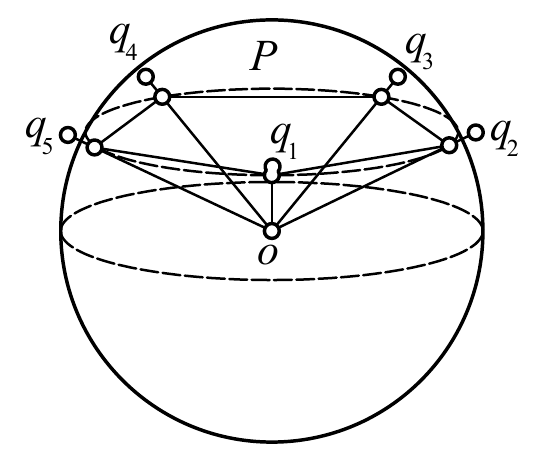}
\caption{The construction of $Q(\varepsilon)$ with $\varepsilon = 0.15$ and $n=5$, where $q_i = (1+\varepsilon) p_i$.}
\label{fig:thm1}
\end{center}
\end{figure}

Then $Q(\varepsilon)$ is the union of $\B^3$ and $n$ cones $C_i$, $i=1,2,\ldots,n$, with spherical circles centered at the points $p_i$ as directrixes.
By symmetry, the center of mass $c$ of $Q(\varepsilon)$ is on the $z$-axis, and by the Thales Theorem and Lemma~\ref{lem:approximation}, its distance from $o$ is of magnitude $O(\varepsilon^2)$. Thus, the points $(1+\varepsilon) p_i$ are equilibrium points of $Q(\varepsilon)$ if $\varepsilon$ is sufficiently small.
Furthermore, we have $c \neq o$. On one hand, from this we have that there are exactly two equilibrium points of $Q(\varepsilon)$ on $\Sph^2$, namely the points $(0,0,1)$ and $(0,0,-1)$, and exactly one of these points is stable, and the other one is unstable. On the other hand, this also implies that $Q(\varepsilon)$ has exactly one equilibrium point on each cone $C_i$ apart from its vertex; this point is a saddle point in the relative interior of a generating segment of $C_i$ (cf. Figure~\ref{fig:thm_rot}).

\begin{figure}[ht]
\begin{center}
\includegraphics[width=0.4\textwidth]{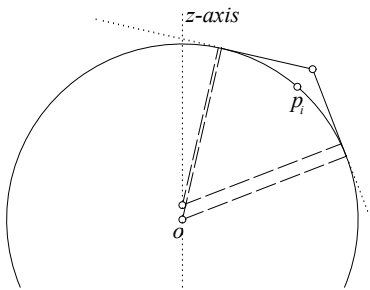}
\caption{An illustration for the proof of Theorem~\ref{thm:rotational}: equilibrium points on a conic part of $\bd (Q(\varepsilon)$.}
\label{fig:thm_rot}
\end{center}
\end{figure}

Now, we set $Q'(\varepsilon) = (Q(\varepsilon) \div (\tau \B^3)) +(\tau \B^3)$, where $\tau > 0$ is negligible compared to $\varepsilon$. Then $Q'(\varepsilon)$ is a smooth convex body which has $1$ stable, $n$ saddle-type and $(n+1)$ unstable points by Lemma~\ref{lem:approximation}. To guarantee that the body has positive principal curvatures at each equilibrium point, we may replace the generating segments of the cones by circular arcs of radius $R > 0$, where $\frac{1}{R}$ is negligible compared to $\tau$. The obtained convex body $K(\varepsilon) \in (1,n+1)_c$ satisfies the required conditions.

Finally, we prove Corollary~\ref{cor:girth}. Clearly, $\diam(\B^3)=2$ and $g(\B^3) = 2\pi$, and hence, the first statement follows from the continuity of diameter and girth with respect to Hausdorff distance.
On the other hand, it is well known that for any convex body $K$ in $\Re^3$, $\frac{\diam(K)}{g(K)} \geq \frac{1}{\pi}$; here we include the proof only for completeness. Let $K \subset \Re^3$ be a convex body, and let $w(\cdot)$ and $\perim(\cdot)$ denote mean width and perimeter, respectively. Then, for any projection $M$ of $K$, we have $w(M) \leq \diam(M) \leq \diam(K)$. On the other hand, it is well known that $w(M)= \frac{\perim(M)}{\pi}$, implying that $\frac{\diam(K)}{\perim(M)} \geq \frac{1}{\pi}$. From this, we readily obtain $\frac{\diam(K)}{g(K)} \geq \frac{1}{\pi}$.

\section{Proof of Theorem~\ref{thm:approx}}\label{sec:proof2}
                      
First, observe that by Lemma~\ref{lem:discrete}, $G$ is finite.

We construct $P$ by truncating $K$ with finitely many suitably chosen planes; or more precisely by taking its intersection with finitely many suitably chosen closed half spaces. We carry out the construction of $P$ in three steps.

In Step 1, we replace some small regions of $\bd (K)$ by polyhedral regions disjoint from all equilibrium points of $K$. These polyhedral regions will serve as `controlling regions'; that is, after constructing a polyhedron with $S$ stable and $U$ unstable points relative to $o$, we modify these regions to move back the center of mass of the constructed polyhedron to $o$.
In Step 2, we truncate a neighborhood of each equilibrium point to replace it by a polyhedral surface in such a way that each polyhedral surface contains exactly one equilibrium point relative to $o$, and the type of this point is the same as the type of the corresponding equilibrium point of $K$. 
Finally, in Step 3 we truncate the remaining part of $\bd (K)$ such that no new equilibrium point is created.
We describe these steps in three separate subsections.

In the proof, we denote by $\E$ the set of the equilibrium points of $K$, and for any point $q \in \bd(K)$, we denote by $H_q$ the unique supporting plane of $K$ at $q$. Observe that by the definition of $(S,U)_c$, $H_q \cap K = \{ q \}$ for any $q \in \E$, and set $X = \bd(K) \setminus \mathcal{E}$. Finally, by $F$ we denote the set of the fixed points of $G$, and note that $F$ is a linear subspace of $\Re^3$ that contains the center of mass of any $G$-invariant convex body.

\subsection{Step 1: Truncating some small regions disjoint from all equilibrium points}

We distinguish two cases depending on $\dim (F)$.\\

\textbf{Case 1}, if $F = \Re^3$.\\
By Carath\'eodory's theorem, there are points $z_1,z_2,z_3,z_4 \in X$ such that $o \in \conv \{z_1,z_2,z_3,z_4 \}$. Since $X$ is open in $\bd K$, we may choose these points to satisfy $o \in \inter \conv \{z_1,z_2,z_3,z_4 \}$. By the definition of $(S,U)_c$, we have that $H_{z_i}$ is disjoint from $H_q$ for any $1 \leq i \leq 4$ and $q \in \E$. We show that the $z_i$s can be chosen such that the planes $H_{z_i}$ are pairwise distinct.
Suppose for contradiction that, say, three of these planes coincide. Without loss of generality, assume that $H_{z_1}=H_{z_2}=H_{z_3}$, and denote this common plane by $H$. Then there are points $z_1', z_2', z_3' \in \mathrm{relbd} (K \cap H)$ such that $\conv \{ z_1, z_2, z_3\} \subseteq \conv \{ z_1', z_2', z_3'\}$. Now we may replace $z_2'$ and $z_3'$ by two points $z_2'',z_3'' \notin H$ such that $z_i''$ is sufficiently close to $z_i'$ for $i=2,3$. Then we have $o \in \inter \conv \{ z_1',z_2'', z_3'', z_4\}$, where no supporting plane of $K$ contains three of the points.
If a supporting plane of $K$ contains two of these points, we may repeat the above procedure, and finally obtain some points $w_1, \ldots, w_4 \in X$ such that $o \in \inter \conv \{ w_1, \ldots, w_4\}$, and the sets $H_{w_i} \cap K$ are pairwise disjoint.

Let $\delta > 0$, and let us truncate $K$ by planes $H_1, \ldots, H_4$ such that for all $i$s $H_i$ is parallel to $H_{z_i}$ and it is at the distance $\delta$ from it in the direction of $o$. We denote the truncated convex body by $K'$ and its center of mass by $c'$. By Remark~\ref{rem:stability}, if $\delta$ is sufficiently small, then $K$ has $S$ stable and $U$ unstable equilibrium points relative to $c'$, and each such equilibrium point is contained in $\bd (K') \setminus (\bigcup_{i=1}^4 H_i)$. Furthermore, if $\delta$ is sufficiently small, then for any point $q \in H_i \cap \bd (K) $ and any plane $H$ supporting $K'$ at $q$, $q$ is not perpendicular to $H$. Finally, since $o \in \inter \conv \{ w_1, \ldots, w_4 \}$, we may choose points $w_i' \in \relint (H_i \cap K')$ such that $c' \inter \conv \{ w_1', \ldots, w_4' \}$. For any $w_i'$, choose some convex $n_i$-gon $P_i \subset \relint (H_i \cap K')$ such that the center of mass of $P_i$ is $w_i'$ (cf. Figure~\ref{fig:step1}). Now we obtain the body $K''$ by truncating $K'$ by $n_i$ planes almost parallel to $H_i$ such that for each $i$, every side of $P_i$ is contained in one of the truncating planes, and we have $P_i = H_i \cap K''$. We choose the truncating planes such that the center of mass $c''$ of $K''$ satisfies $c'' \in \inter \conv \{ w'_1, \ldots, w'_4 \}$, and $K''$ has $S$ stable and $U$ unstable points on the smooth part of its boundary, and no equilibrium point on the non-smooth part. Now, we set $K_1=K''-c''$, $w_i''=w_i'-c''$ and $P_i'=P_i-c''$ for all $i$s, and for some sufficiently small $\bar{\tau} > 0$ we define four $1$-parameter families of polyhedral cones $C_i(\tau_i)= \conv (P_i' \cup \{ (1+\tau_i) w_i'' \})$, $\tau_i \in [0,\bar{\tau}]$, $i=1,2,3,4$. Furthermore, for later use, we set $K_0 = K - c''$, and call the set $X_1 = K_1 \cap \bd(K_0)$
the \emph{non-truncated part of $\bd(K_1)$}.

\begin{figure}[ht]
\begin{center}
\includegraphics[width=.7\textwidth]{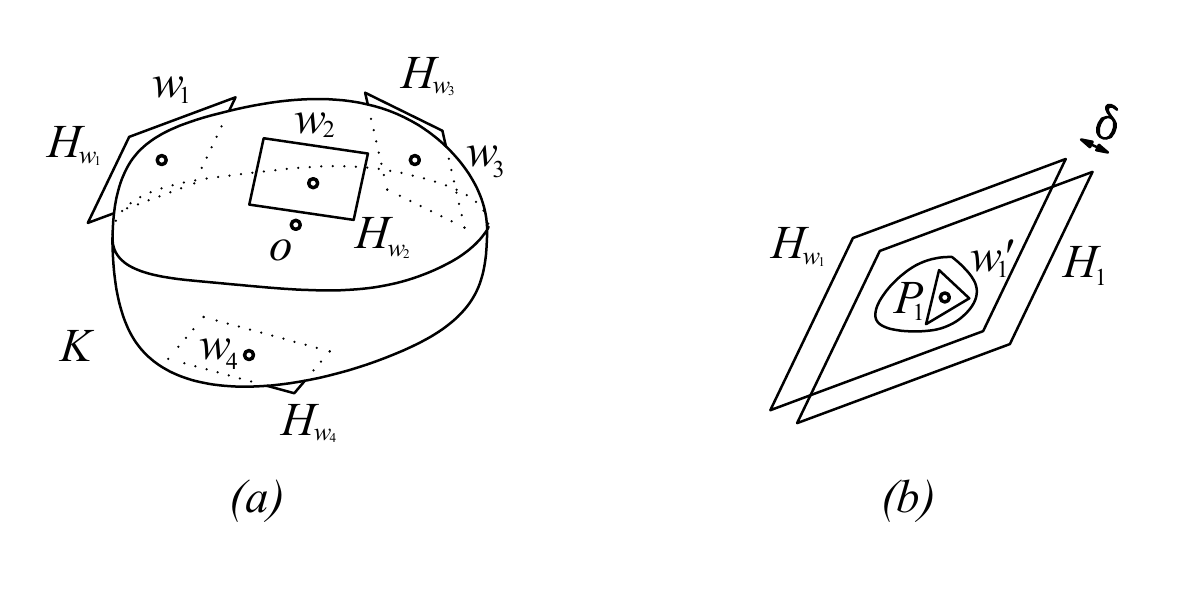}
\caption{An illustration for Step 1. Panel (a): The points $w_i$ and the planes supporting $K$ at these points. Panel (b): the polygon $P_1$ constructed on the intersection $H_1 \cap K'$.}
\label{fig:step1}
\end{center}
\end{figure}

If $\bar{\tau}$ is sufficiently small, then $K_1 \cup \bigcup_{i=1}^4 C_i(\tau_i)$ is convex for all values of the parameters $\tau_i$. Furthermore, note that the center of mass of $C_i(\tau_i)$ lies on the line through $[o,w''_i]$ for any value of $i$. This yields that the first moment of $\bigcup_{i=1}^4 C_i(\tau_i)$ is $\sum_{i=1}^4 (\alpha_i \tau_i + \beta_i \tau_i^2 ) w_i''$ for some suitable constants $\alpha_i, \beta_i >0$, which implies that it is surjective in a neighborhood of $o$. Thus, since $K_1$ is centered, after we replace the non-truncated part of $\bd (K_1)$ by a polyhedral surface in Steps 2 and 3, we may choose values of the $\tau_i$s in such a way that the sum of the first moment of $\bigcup_{i=1}^4 C_i(\tau_i)$ and of the first moment of the polyhedron $P$ obtained after Step 3 is equal $o$. This makes the polyhedron $P \cup \bigcup_{i=1}^4 C_i(\tau_i)$ centered.
Finally, we observe that by choosing sufficiently small values of $\delta$ and $\bar{\tau}$, for all values of the parameters, no point of $C_i(\tau_i)$ is an equilibrium point of $K_1$ relative to $o$.\\

\textbf{Case 2}, if $F \neq \Re^3$.\\
In this case $F$ is a plane or a line through $o$, or $F= \{ o \}$.
Consider the case that $F$ is a plane. Then, by the properties of isometries, the orbit of any point $p$ under $G$ consists of $p$ and its reflection about $F$. 
Let $K_F = F \cap K$, and observe that since $K$ is symmetric about $F$, for any $q \in \bd (K)$ $H_q$ is either disjoint from $K_F$ or $q \in  \mathrm{relbd} (K_F)$. 
Thus, we may apply the argument in Case 1 for $K_F$, and obtain some points $z_1, z_2, z_3 \in X \cap K_F$ such that $o \in \relint \conv \{ z_1, z_2, z_3 \}$ and the planes $H_{z_i}$ are pairwise disjoint. But then there are some points $z_3'$ and $z_3''$, sufficiently close to $H_{z_3}$ such that $z_3''$ is the reflected copy of $z_3'$ about $F$, $o \in \inter \conv \{ z_1, z_2, z_3', z_3'' \}$, and the supporting planes at these points are pairwise disjoint. Clearly, the set $\{ z_1, z_2, z_3', z_3'' \}$ is $G$-invariant. From now on, we may apply the argument in Case 1.

If $F$ is a line, we may apply a similar argument. Indeed, let $F \cap K = [p,p']$. Then, by symmetry, $p$ and $p'$ are equilibrium points with respect to $o$, and thus, the planes $H$ and $H'$, containing $p$ and $p'$, and perpendicular to $[o,p]$ and $[o,p']$, respectively, support $K$. Thus, we may choose a point $w_1$, with $\{ w_1, w_2, \ldots, s_k \}$ as its orbit, such that $w_1$ is sufficiently close to $H$, and the orbit of $H_1 \cap K$, where $H_1$ is the supporting plane of $K$, contains mutually disjoint sets. Choosing points $w_1', w_2', \ldots, w_k'$ similarly and sufficiently close to $H'$, we have $o \in \inter \conv \{ w_1, \ldots, w_k, w_1', \ldots, w_k' \}$, and we may proceed as in Case 1.

Finally, if $F = \{ o \}$, then any $G$-invariant convex body (and in particular the convex polyhedron constructed in Steps 2 and 3) is centered. Thus, in this case we may skip Step 1. 

Based on the existence of the families $C_i(\tau_i)$, in Steps 2 and 3 all equilibrium points are meant to be \emph{relative to $o$}. We denote by $\E_1$ the set of the equilibrium points of $K_1$.

\subsection{Step 2: Truncating small neighborhoods of equilibrium points}

In this step we take all points $q \in \E_1$, and truncate neighborhoods of them in $\bd (K_1)$ simultaneously for all points in the orbit of $q$. Here we observe that the orbit of an equilibrium point consists of equilibrium points. We carry out the truncations in such a way that the regions truncated in Step 1 or Step 2 are pairwise disjoint. We denote the convex body obtained in this step by $K_2$, and set $X_2 = \bd (K_1) \cap K_2$.
We construct $K_2$ in such a way that for any point $p \in X_2$ there is no supporting plane $H$ of $K_2$ through $p$ which contains an equilibrium point of $K_2$.

Consider some $q \in \E_1$. Without loss of generality, we may assume that $q=(0,0,\rho)$ for some $\rho > 0$, and denote by $e_x, e_y,$ and $e_z$ the vectors of the standard orthonormal basis. With a little abuse of notation, for any $p \in \bd(K_0)$, we denote by $H_p$ the unique supporting plane of $K_0$ at $p$.\\

\textbf{Case 1}, the stabilizer of $q$ in $G$ is the identity; i.e. $q$ not fixed under any element of $G$ other than the identity.

\emph{Subcase 1.1}, $q$ is a stable point of $K_1$.\\
In this case we truncate $K_1$ by a plane $H_q'$ parallel to, and sufficiently close to $H_q$. Then we truncate $K_1$ by finitely many additional planes such that any point of $H_q' \cap \bd(K_1)$ is truncated by at least one of them, and for any point $p$ of the non-truncated part $X_2$ of $\bd(K_1)$ there is no supporting plane $H$ of $K_2$ through $p$ which contains an equilibrium point of $K_2$ relative to $o$.

\emph{Subcase 1.2}, $q$ is a saddle-type equilibrium point.\\
Note that by Remark~\ref{rem:curvature}, $q$ is not an umbilic point of $\bd (K_1)$, and its principal curvatures $\kappa_1 < \kappa_2$ satisfy the inequalities 
$0 < \kappa_1 < \frac{1}{\rho} < \kappa_2$.

Without loss of generality, we may assume that the sectional curvature of $\bd(K_1)$ in the $(x,z)$-plane is $\kappa_1$, and in the $(y,z)$-plane it is $\kappa_2$. For any $\tau > 0$, let $K_1(\tau)$ denote the set of points of $K_1$ with $z$-coordinates at least $\rho - \tau$, and observe that by the fact that $\kappa_2 > \kappa_1 > 0$, for any $\varepsilon > 0$ there is some $\tau > 0$ such that $K_1(\tau)$ is contained in the neighborhood of $q$ of radius $\varepsilon$. For any $\{i,j \} \subset \{x,y,z\}$, let $H_{ij}$ denote the $(i,j)$ coordinate plane, and $\proj_{ij}$ denote the orthogonal projection of $\Re^3$ onto $H_{ij}$.

For any $\eta> 0$, let $C(\eta)$ be the set of the points of the circular disk $y^2+(z-\rho+\eta)^2 \leq {\eta}^2$ in $H_{yz}$ whose $z$-coordinates are at least $\rho - \tau$. Then, since $\bd(K_1)$ is $C^2$-class in a neighborhood of $q$, we have that for any $\eta_1, \eta_2 > 0$ satisfying $\frac{1}{\rho} < \frac{1}{\eta_1} < \kappa_2 < \frac{1}{\eta_2}$, if $\tau$ is sufficiently small, then $C(\eta_2) \subseteq \proj_{yz}( K_1(\tau) ) \subseteq C(\eta_1)$ holds. Since $\proj_{yz}(K_1)$ is convex, $\mathrm{relbd}(\proj_{yz}(K_1))$ has exactly two points with their $z$-coordinates equal to $\rho - \tau$. Let these points be $q^-=(0,\sigma^-,\rho-\tau)$ and $q^+=(0,\sigma^+,\rho-\tau)$ such that $\sigma^- < 0 < \sigma^+$ (cf. Figure~\ref{fig:step2}). Then there are some supporting lines $L_-, L_+$ of $\proj_{yz}( K_1)$ passing through $q^-$ and $q^+$, respectively.

\begin{figure}[ht]
\begin{center}
\includegraphics[width=0.5\textwidth]{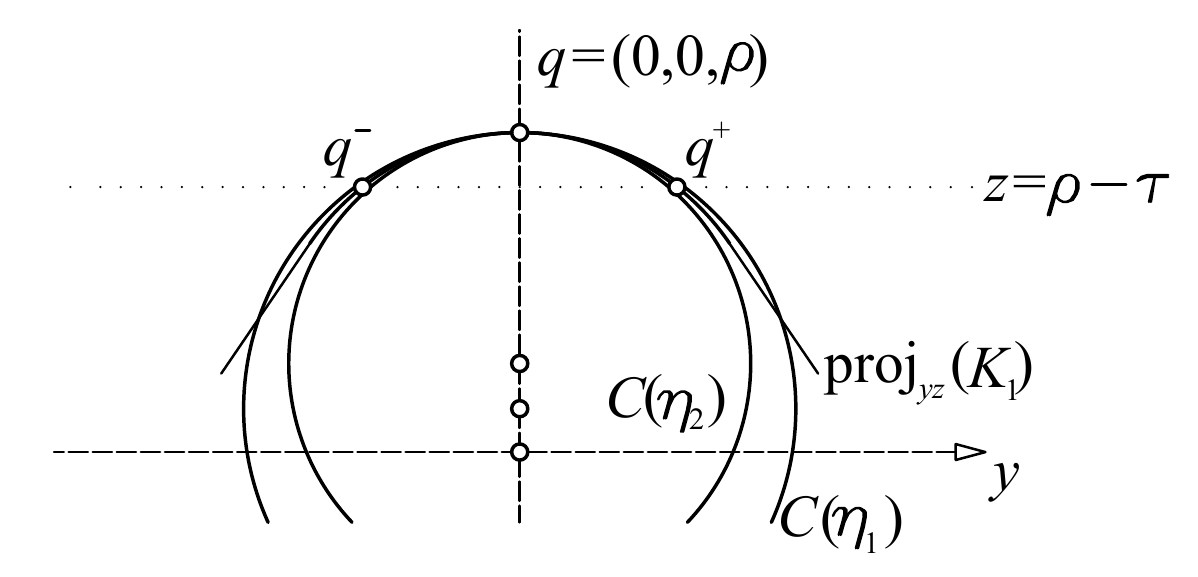}
\caption{An illustration for Step 2 in the proof of Theorem~\ref{thm:approx}.}
\label{fig:step2}
\end{center}
\end{figure}

Clearly, for $i \in \{ -, +\}$, $L^i$ is the orthogonal projection of some supporting plane $H^i$ of $K_1$ onto $H_{yz}$.
Let $r^i$ be a point of $L^i$, on the open half line starting at $q^i$ such that the line through $[o,q^i]$ do not separate $q$ and $r_i$. Then, for any fixed values of $\eta_1$ and $\eta_2$ and sufficiently small value of $\tau$, the angles $o q^i r^i \angle$ are obtuse.
Now we choose some sufficiently small value of $\zeta > 0$, and define ${q^i}'= \zeta e_z + q^i$, ${r^i}'=\zeta e_z + r^i$, ${L^i}' = \zeta e_z + L^i$, ${H^i}' = \zeta e_z + H^i$ and $q'=-\zeta e_z + q$. Then we may assume that $|{q^i}'| < |q'|$, the angles $o {q^i}' {r^i}' \angle$ are obtuse, and the planes ${H^i}'$ are disjoint from $K_1$.

Thus, by Lemma~\ref{lem:conway_lemma}, for $i \in \{ -,+ \}$, there is a polygonal curve $\Gamma_i$ in $H_{yz}$, connecting $q'$ to ${q^i}'$ such that
the Euclidean distance measured from the points of $\Gamma_i$ to $o$ is strictly decreasing as we move from $q'$ to ${q^i}'$ (see the remark after Lemma~\ref{lem:conway_lemma}), $\Gamma_i$ is contained in $\mathrm{relbd} (\conv (\Gamma_i \cup \{ o \}))$, and the latter set is supported by ${L_i}'$ in $H_{yz}$.
Consider the closed, convex set $C_H \subset H_{yz}$ bounded by $\Gamma_- \cup \Gamma_+$, the half line of ${L^+}'$ starting at ${q^+}'$ and not containing ${r^+}'$, and the half line of ${L^-}'$ starting at ${q^-}'$ and not containing ${r^-}'$, and set $C= \proj_{yz}^{-1} (C_H) \subset \Re^3$. By the previous consideration, $C$ is an infinite convex cylinder with the properties that $o \in \inter(C)$, $K_1 \setminus C \subseteq K_1(\tau)$, and the equilibrium points of $C$ relative to $o$ are $q$ and two stable points on ${L_+}'$ and ${L_-}'$, respectively. To construct $K_2$, we truncate $K_1$ by $C$, and show that, apart from the saddle point $q'$, no new equilibrium point is created by this truncation. 

Observe that by our construction, any new equilibrium point is a point of $\bd (K_1) \cap \bd(C)$. Suppose that there is some equilibrium point $q \in \bd(K_1) \cap \bd(C)$ of $K_1 \cap C$. To reach a contradiction, we identify $H_{xy}$ with $\Re^2$, and parametrize $\bd(K_1)$ in a neighborhood of $q$ as the graph of a function $f : \Re^2 \to \Re$ and $\bd(C)$ in a neighborhood of $q'$ as the graph of a function $g:\Re^2 \to \Re$. Note that by the nondegeneracy of $q$, for some value of $\phi > 0$, $o$ has a neighborhood $U \subset \Re^2$ such that for any $w=(x_0,y_0) \in U$ whose angle with the $x$-axis is at most $\phi$, $|(x,y,f(x,y)|$ is locally strictly increasing at $w$ as a function of $x$ if $x_0 > 0$ and locally strictly decreasing if $x_0 < 0$; furthermore, if the angle of $w$ with the $y$-axis is at most $\phi$, then $|(x,y,f(x,y)|$ is locally strictly decreasing as a function of $y$ if $y_0 > 0$ and locally strictly increasing if $y_0 < 0$. Note that by Remark~\ref{rem:monotone}, the same property holds for $\min \{ |(x,y,f(x,y))|, |(x,y,g(x,y))| \}$ as well. Observe that since $q$ is an equilibrium point of $K_1 \cap C$, it is an equilibrium point of the section of $K_1 \cap C$ with the plane through $q$ parallel to $H_{xy}$. Thus, by Remark~\ref{rem:forsaddle}, if $\tau > 0$ is chosen small enough, then the angle of $\proj_{xy}(q)$ with the $x$-axis or the $y$-axis is at most $\phi$. But this contradicts our previous observation that at such a point 
$\min \{ |(x,y,f(x,y))|, |(x,y,g(x,y))| \}$ is locally strictly increasing or decreasing parallel to the $x$- or the $y$-axis.

Finally, to exclude the possibility that a support plane of $K_1 \cap C$ through a point in $\bd(K_1) \cap \bd(C_1)$ contains $q'$, we truncate all points of $\bd(K_1) \cap \bd(C)$ by planes, not containing $q'$, whose intersections with $K_1 \cap C$ do not contain equilibrium point.

\emph{Subcase 1.3}, $q$ is an unstable point.\\
In this case both principal curvatures $\kappa_1, \kappa_2$ of $\bd(K_1)$ at $q$ satisfy $\kappa_1, \kappa_2 >  \frac{1}{\rho} > 0$, and thus, there is some constant $\max \left\{ \frac{1}{\kappa_1}, \frac{1}{\kappa_2} \right\} < \eta < \rho$ such that the ball $\frac{\rho-\eta}{\rho} q + \eta \B^3$ contains a neighborhood of $q$ in $\bd(K_1)$. We parametrize $\bd(K_1)$ in a neighborhood of $q$ as the graph $\{ z=f(x,y) \}$ of a function $(x,y) \mapsto f(x,y)$, and note that by nondegeneracy, there is some $r_0 > 0$ such that the function $|(r \cos (\phi), r \sin (\phi) ,f( r \cos (\phi) , r \sin (\phi)))|$ is a strictly decreasing function of $r$ on $[0,r_0]$ for all values of $\phi$.

For any $\tau > 0$, let $K_1(\tau)$ denote the set of points of $K_1$ with $z$-coordinates at least $\rho - \tau$, let $H_{\tau}$ denote the plane with equation $\{ z = \rho-\tau \}$. Let $\tau > 0$ be sufficiently small. Then there is a circle $C_0$ centered at $(0,0,\rho-\tau)$ which is contained in $H_{\tau} \cap \inter (\eta \B^3)$, and is disjoint from $K_1$. Let $H$ be a plane supporting $K_1$ at a point of $H_{\tau}$ such that its angle $\alpha$ with $H_{\tau}$ is minimal among these supporting planes. Let $H'$ be the translate of $H$ touching $C_0$ such that $H$ strictly separates $o$ and $H'$, and
let the intersection point of $H'$ and the $z$-axis be $r$. Consider the infinite cone $C$ with apex $r$ and base $C_0$, and observe that it contains $K_1 \setminus K_1(\tau)$ in its interior. Now, let $q'= q - \zeta e_z$ for some suitably small $\zeta > 0$, and let $\Gamma$ be a polygonal curve connecting $q'$ to a point $p \in C_0$ such that the plane of $o,p,q'$ contains $\Gamma$, $\Gamma \subset \mathrm{relbd} (\conv (\Gamma \cup \{ o \}))$, and the Euclidean distance function is strictly decreasing along $\Gamma$ from $q'$ to $p$. Let $L_p$ denote the closed half line in the line of $[r,p]$ starting at $p$ and not containing $r$, and let $\Gamma' = \Gamma \cup L_p$. Let $m \geq 3$ be arbitrary, and for any $i=0,1,\ldots, m-1$, let $\Gamma'_i$ denote the rotated copy of $\Gamma'$ around the $z$-axis, with angle $\frac{2\pi i}{m}$. Let $P' = \conv \bigcup_{i=0}^{m-1} \Gamma'_i$. Then $P'$ is a convex polyhedral domain such that $K_1 \setminus P' \subseteq K_1(\tau)$, and if $m$ is sufficiently large, then at any boundary point of $P'$ with $z$-coordinate greater than $\rho - \tau$, $|(x,y,g(x,y))|$ is strictly locally increasing in a neighborhood of $(0,0)$ as a function of $\sqrt{x^2+y^2}$, where $\bd(P')$ is given as the graph of the function $z=g(x,y)$. Thus, by Remark~\ref{rem:monotone} and following the idea at the end of Subcase 1.2 in Step 2, we may truncate a neighborhood of $q$ in $\bd(K_1)$ by a convex polyhedral region $P'$ such that the only equilibrium point of the truncated body on $\bd(P')$ is the unstable point $q'$, and the truncated body has no non-truncated boundary point where some supporting plane contains an equilibrium point. 

The procedure discussed in Subcases 1.1-1.3 for $q$ is applied for any equilibrium point in the orbit of $q$ in an analogous way.\\

\textbf{Case 2}, if the stabilizer of $q$ in $G$ is not the identity.\\
In this case the procedure described in Case 1 is carried out in such a way that the truncating polyhedral domain is invariant under any element of $G$ fixing $q$.

Summing up, to construct $K_2$ in Step 2 we truncated a neighborhood of each equilibrium point of $K_1$ by a polyhedral region in such a way that each region contains exactly one equilibrium point relative to $o$, and no plane supporting $K_2$ at any point of $X_2=\bd(K_1) \cap K_2$ contains an equilibrium point of $K_2$ relative to $o$. In addition, $K_2$ is $G$-invariant.

\subsection{Step 3: Truncating the remaining part of the boundary}

In this step let $Y = X_1 \cap X_2$.
Furthermore, for any plane $H$ let $o_H$ denote the orthogonal projection of $o$ onto $H$, and
let $\mathcal{H}$ denote the family of planes $H$ with the property that $o_H \in K_2$. Note that $\mathcal{H}$ consists of
\begin{itemize}
\item all planes through $o$, and, 
\item for any $p \in K_2 \setminus \{ o \}$, the (unique) plane passing through $p$ and perpendicular to $[o,p]$.
\end{itemize}
Observe that $Y$ is compact, and by our construction, for any plane $H$ supporting $K_2$ at some point $p \in Y$, we have $o_H \notin H \cap K_2$; or equivalently, we have $\{ o_H : H \in \mathcal{H} \} \cap Y = \emptyset$. Thus, by compactness, there is some $\delta > 0$ such that for any $H \in \mathcal{H}$ and $p \in Y$, the distance of $o_H$ and $p$ is at least $\delta > 0$.
Now, for any $p \in Y$, let $H_p$ denote the unique closed half space whose boundary supports $K_0$ at $p$ and which satisfies $\inter(H_p) \cap K_1 = \emptyset$. Let $u_p$ denote the outer unit normal vector of $K$ at $p$, and for any $\zeta > 0$, set $H_p(\zeta) = H_p - \zeta u_p$, and $Y(\zeta) = K_2 \cap \left( \bigcup_{p \in Y} H_p(\zeta) \right)$ (cf. Figure~\ref{fig:step3}). Clearly, $Y(\zeta)$ tends to $Y$ with respect to Hausdorff distance as $\zeta \to 0^+$.
Thus, there is some sufficiently small $\zeta_0 > 0$ such that for any $H \in \mathcal{H}$, $o_H \notin Y(\zeta_0)$.

\begin{figure}[ht]
\begin{center}
\includegraphics[width=0.4\textwidth]{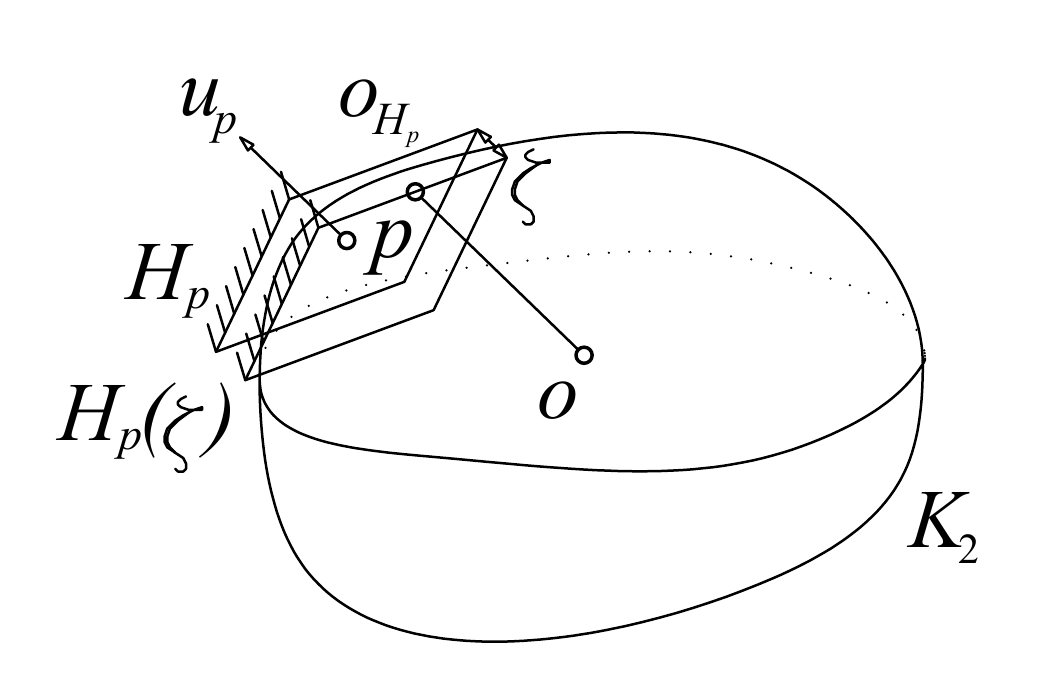}
\caption{An illustration for Step 3 in the proof of Theorem~\ref{thm:approx}.}
\label{fig:step3}
\end{center}
\end{figure}

Now, for any $p \in Y$, set $U(p) = Y \cap \inter (H_p(\zeta_0))$. Then $U(p)$ is an open neighborhood of $p$ in $Y$. Thus, by the compactness of $Y$, there are finitely many points $p_1, \ldots, p_m$ such that $\bigcup_{i=1}^m U(p_i) = Y$. Then, clearly $P=K_2 \cap \left(\bigcap_{i=1}^m \left( \Re^3 \setminus \inter (H_{p_i}(\zeta_0)) \right) \right)$ is a convex polytope contained in $K_2$. Furthermore, since $\zeta_0 > 0$ can be arbitrarily small, $P$ can be arbitrarily close to $K_2$.

We show that no point $q \in \bd(P)$, contained in some $\bd (H_{p_i}(\zeta_0))$ is an equilibrium point of $P$. Indeed, if $q$ was such a point, then the plane $H$ through $q$ and perpendicular to $[o,q]$ is contained in $\mathcal{H}$. On the other hand, $q \in \bd(P) \cap \bd (H_{p_i}(\zeta_0)) \subset Y(\zeta_0)$, which is impossible by our choice of $\zeta_0$.

Finally, we may choose the points $p_1, p_2, \ldots, p_m$ in such a way that the set $\{ p_1, \ldots, p_m \}$ is invariant under the act of any element of $G$.

\section{Remarks and open questions}\label{sec:remarks}

First, we remark that by using truncations instead of conic extensions in the proof of Theorem~\ref{thm:rotational}, we readily obtain Theorem~\ref{thm:rot2}. Here, a \emph{mono-unstable} convex body is meant to be a nondegenerate convex body with a unique unstable point.

\begin{thm}\label{thm:rot2}
For any $n \geq 3$, $n \in \mathbb{Z}$ and $\varepsilon > 0$ there is a homogeneous mono-unstable polyhedron $P$ such that $P$ has an $n$-fold rotational symmetry and
$d_H(P, \B^3) < \varepsilon$.
\end{thm}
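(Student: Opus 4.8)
The plan is to mimic the proof of Theorem~\ref{thm:rotational} but replace the conic extensions with truncations, so that instead of adding mass outside $\B^3$ to break the symmetry one removes mass. More precisely, I would start with a regular $n$-gon $P$ inscribed in a circle $C$ on $\B^3$ parallel to, but not contained in, the $(x,y)$-plane, with vertices $p_1,\dots,p_n$, and instead of forming $Q(\varepsilon) = \conv(\B^3 \cup \{(1+\varepsilon)p_i\})$ I would form $Q(\varepsilon) = \B^3 \cap \bigcap_{i=1}^n H_i^-$, where $H_i^-$ is the closed half space bounded by the plane perpendicular to $p_i$ at distance $1-\varepsilon$ from $o$ in the direction of $p_i$ (so we slice off $n$ congruent spherical caps arranged with $n$-fold rotational symmetry about the $z$-axis). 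This body is again $n$-fold rotationally symmetric, its Hausdorff distance from $\B^3$ is $O(\varepsilon)$, and by Lemma~\ref{lem:approximation} its center of mass $c$ lies on the $z$-axis at distance $O(\varepsilon^2)$ from $o$ with $c\neq o$ (the caps are removed asymmetrically with respect to the $(x,y)$-plane since $C$ is not centered there).

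Next I would identify the equilibrium points of $Q(\varepsilon)$ relative to $c$. Because $c$ lies on the $z$-axis and $c\neq o$, on the spherical part $\Sph^2 \cap \bd(Q(\varepsilon))$ there are exactly two equilibria, the north and south poles $(0,0,\pm 1)$, one stable and one unstable. On each flat face $H_i^- \cap \bd(Q(\varepsilon))$ there is exactly one equilibrium point, namely the foot of the perpendicular from $c$ to that face, and since each such face is a small disk whose relative interior meets the perpendicular line from $c$, this equilibrium is a nondegenerate stable point lying in the relative interior of the face. Along each circular edge separating the sphere from a flat face there is no equilibrium (this requires checking, as in the treatment of the conic case, that the distance function is strictly monotone along and across the edge; here the key point is that the flat face is ``flatter'' than the sphere, so an argument analogous to Subcase~1.1 of Step~2 applies). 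Hence $Q(\varepsilon)$ is a nondegenerate convex polyhedron with exactly one unstable point and $n+1$ stable points, i.e.\ it is mono-unstable and $n$-fold rotationally symmetric; a small smoothing via $(Q(\varepsilon)\div\tau\B^3)+\tau\B^3$ is not even needed since we want a polyhedron, not a smooth body, but one should double-check that truncating does not accidentally create new stable points on the newly introduced flat facets — it does not, because each facet contributes exactly one foot-of-perpendicular point and we arranged each facet's relative interior to contain it.

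Actually, to be safe and to reuse machinery already in hand, the cleanest route is to invoke Theorem~\ref{thm:approx} with the group $G = C_n$ (the cyclic group of order $n$ acting by rotations about the $z$-axis): it suffices to exhibit a centered, $C_n$-invariant smooth convex body $K \in (m,1)_c$ for some $m$ with $d_H(K,\B^3) < \varepsilon$, and then Theorem~\ref{thm:approx} produces the desired polyhedron. Such a $K$ is obtained exactly as in the proof of Theorem~\ref{thm:rotational} but with the roles of ``stable'' and ``unstable'' interchanged: take $\B^3$, carve $n$ congruent shallow radially symmetric dimples (replacing the conic bumps) arranged about the axis, and round everything so the principal curvatures at each equilibrium point avoid $1/\rho$ appropriately (Remark~\ref{rem:curvature}); the center of mass is pushed slightly off $o$ along the axis, creating one stable and one unstable pole and one saddle near each dimple, giving $K \in (n+1,1)_c$. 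Then Theorem~\ref{thm:approx} with $G=C_n$ finishes the proof.

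The main obstacle, as in Theorem~\ref{thm:rotational}, is verifying that no \emph{unwanted} equilibrium points appear: one must confirm that along each circular edge (in the direct construction) or in each dimpled region (in the Theorem~\ref{thm:approx} route) the distance function from the center of mass is strictly monotone in the relevant directions, so that the total count of unstable points stays exactly one. This is the same type of curvature-comparison argument used in Subcases~1.1--1.3 of Step~2 and in the proof of Theorem~\ref{thm:rotational}, transported to the ``dimple'' setting; the sign conditions on principal curvatures relative to $1/\rho$ from Remark~\ref{rem:curvature} are what make it work, and the ``flatter than the sphere'' property of a truncating plane is the extreme case $\kappa_i = 0 < 1/\rho$.
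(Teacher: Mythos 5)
Your overall plan — dualize the proof of Theorem~\ref{thm:rotational}, replacing the conic protrusions by truncations — is exactly the approach the paper takes (the paper's proof of Theorem~\ref{thm:rot2} consists of precisely this one-line remark). Your second route, applying Theorem~\ref{thm:approx} to a smooth $C_n$-invariant body in $(n+1,1)_c$, is the correct elaboration of that remark and is essentially the intended argument.

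However, the first, ``direct'' route contains a genuine error. You claim that along each circular edge of the truncated ball $Q(\varepsilon) = \B^3 \cap \bigcap_{i=1}^n H_i^-$ there is no equilibrium point, and conclude $S = n+1$, $U = 1$, $H = 0$. That count violates the Poincar\'e--Hopf identity $S - H + U = 2$: with $n+1$ stable and $1$ unstable equilibria there must be exactly $n$ saddle points, and by $n$-fold symmetry and the reflection symmetry within each sector, one such saddle sits on each circular edge $\partial F_i$ (at the meridian through $p_i$ and the $z$-axis). This is the exact analogue of the saddle on the generating segment of each cone $C_i$ in the paper's proof of Theorem~\ref{thm:rotational}; your appeal to a Subcase~1.1--type ``the face is flatter than the sphere'' argument does not eliminate them, it just governs where they sit. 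A second, smaller issue in the same route: $Q(\varepsilon)$ itself is not a convex polyhedron (its boundary has spherical pieces), so even with the correct count one would still need either a direct polyhedral approximation of the spherical parts with careful equilibrium control, or the detour through Theorem~\ref{thm:approx} — which you supply in your second route. In short, discard the direct route and keep the Theorem~\ref{thm:approx} route, making explicit that the $n$ dimple bottoms carry the $n$ extra stable points while the $n$ saddles sit on the dimple rims, so that indeed $K \in (n+1,1)_c$.
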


We ask the following.

\begin{ques}
What are the positive integers $n \geq 2$ such that class $(1,1)_p$ contains a convex polyhedron with an $n$-fold axis of symmetry?
\end{ques}

In light of the words of Shephard in \cite{Shephard} from Section~\ref{sec:intro} about monostable polyhedra, we remark that a consequence of Theorem~\ref{thm:approx} is that to study the metric properties of nondegenerate polyhedra, in particular monostable polyhedra, it is sufficient to study the metric properties of their smooth counterparts, which seem to be much more tractable.

Next, we conjecture that Theorem~\ref{thm:approx} remains true if we omit the condition that the principal curvatures of $\bd(K)$ at every equilibrium point of $K$ are strictly positive.

Finally, to propose a conjecture for the first part of Problem~\ref{prob3}, we recall the following concept from \cite{gomboc2}, where the function $\rho_K : \Sph^2 \to \Re$, $\rho_K (x) = \max \{ \lambda : \lambda x \in K \}$ is called the \emph{gauge function} of the convex body $K$.

\begin{defn}
Let $K \in \Re^3$ be a centered convex body, and for any simple, closed curve $\Gamma \subset \Sph^2$, let $\Gamma^+$ and $\Gamma^-$ denote the two compact, connected domains in $\Sph^2$ bounded by $\Gamma$.
Then the quantities
\[
F(K) = \sup_{\Gamma} \sup_{p_1 \in \Gamma^+, p_2 \in \Gamma^-} \frac{\min \{ \rho_K(s) : s \in \Gamma \} }{\max \{ \rho_K(p_1), \rho_K(p_2) \} }
\]
and
\[
T(K) = \sup_{\Gamma} \sup_{p_1 \in \Gamma^+, p_2 \in \Gamma^-} \frac{\min \{ \rho_K(p_1), \rho_K(p_2) \} }{\max  \{ \rho_K(s) : s \in \Gamma \} }
\]
are called the \emph{flatness} and the \emph{thinness} of $K$, respectively.
\end{defn}

Domokos and V\'arkonyi in \cite{gomboc2} proved that for any nondegenerate, centered smooth convex body $K$, $F(K)=1$ if and only if $K$ is monostable, and $T(K)=1$ if and only if $K$ is mono-unstable.

Recall that a nondegenerate convex body is \emph{mono-monostatic} if it has a unique stable and a unique unstable point \cite{gomboc}.
We conjecture the following.

\begin{conj}\label{conj:connected}
For any centered convex body $K \subset \Re^3$, $K$ can be uniformly approximated by monostable convex polyhedra if and only if $F(K)=1$, by mono-unstable convex polyhedra if and only if $T(K)=1$, and by mono-monostatic convex polyhedra if and only if $F(K)=T(K)=1$.
\end{conj}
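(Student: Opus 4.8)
The plan is to prove Conjecture~\ref{conj:connected} by separating, in each of the three equivalences, the necessity (``only if'') from the sufficiency (``if''), and by recasting the flatness and thinness conditions through the level sets of the gauge function. Two elementary facts will be used repeatedly. First, for every centered convex body $K$ one has $F(K)\geq 1$ and $T(K)\geq 1$: since $\Gamma\subseteq\Gamma^+$ and $\Gamma\subseteq\Gamma^-$, we get $\max\{\min_{\Gamma^+}\rho_K,\min_{\Gamma^-}\rho_K\}\leq\min_\Gamma\rho_K\leq\max_\Gamma\rho_K\leq\min\{\max_{\Gamma^+}\rho_K,\max_{\Gamma^-}\rho_K\}$, which forces every ratio in the definitions of $F$ and of $T$ to be at least $1$. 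Second, for a smooth nondegenerate centered convex body, $F(K)=1$ if and only if $\rho_K$ has a unique local minimum, equivalently if and only if every sublevel set $\{u\in\Sph^2:\rho_K(u)<t\}$ is connected; symmetrically, $T(K)=1$ if and only if every superlevel set of $\rho_K$ is connected. This is just a restatement of the characterization of monostability and mono-unstability via $F$ and $T$ proved by Domokos and V\'arkonyi in \cite{gomboc2}, together with the Morse-theoretic remark that a Morse function on $\Sph^2$ has connected sublevel sets precisely when it has one local minimum.

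For the three necessity directions I would argue by lower semicontinuity. If $P_n\to K$ in the Hausdorff metric with all bodies centered, then, since $K$ contains a ball about $o$ and hence so do the $P_n$ for large $n$, the gauge functions $\rho_{P_n}$ converge to $\rho_K$ uniformly on $\Sph^2$; consequently each functional $K\mapsto\min_\Gamma\rho_K/\max\{\min_{\Gamma^+}\rho_K,\min_{\Gamma^-}\rho_K\}$ (and its thinness analogue) is continuous, so $F$ and $T$, being suprema of such functionals over all simple closed curves $\Gamma$, are lower semicontinuous; combined with $F,T\geq 1$ this makes $\{F=1\}$ and $\{T=1\}$ closed in the Hausdorff metric. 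Next, a nondegenerate monostable polyhedron is, by the folklore smoothing recalled in Section~\ref{sec:prelim} (pass to $(K\div(\tau\B^3))+(\tau\B^3)$ and recenter), a Hausdorff limit of smooth nondegenerate centered convex bodies with a unique stable point, each of which has flatness $1$ by \cite{gomboc2}; so by lower semicontinuity every nondegenerate monostable polyhedron has $F=1$ (a possibly degenerate monostable polyhedron reduces to this case by an arbitrarily small perturbation keeping it monostable). The same reasoning gives $T=1$ for mono-unstable polyhedra and $F=T=1$ for mono-monostatic ones, and passing to the limit through the closed sets $\{F=1\}$ and $\{T=1\}$ yields the necessity in all three statements.

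For the sufficiency directions I would reduce everything to Theorem~\ref{thm:approx}. It is enough to show that any centered convex body $K$ with $F(K)=1$ is a Hausdorff limit of centered smooth bodies $\tilde K\in(1,U)_c$ for suitable $U\geq 1$; for then Theorem~\ref{thm:approx} applied with $G=\{I\}$ produces a polyhedron $P\in(1,U)_p$, i.e.\ a monostable polyhedron, with $d_H(K,P)<\varepsilon$. The mono-unstable case is identical with $(S,1)_c$ in place of $(1,U)_c$, and the mono-monostatic case with $(1,1)_c$, which is nonempty by \cite{DLS2}. Thus the entire content of the sufficiency direction is a \emph{monotone smoothing} statement for gauge functions: given $\rho_K$ with all sublevel sets connected, produce a nearby smooth function that (i) is the gauge function of a convex body, (ii) has center of mass at $o$, (iii) has positive Gaussian curvature at each of its critical points, and (iv) still has connected sublevel sets, i.e.\ a single local minimum.

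I expect step (iv), imposed together with (i) and (iii), to be the main obstacle, and the reason Conjecture~\ref{conj:connected} is stated as a conjecture. A naive convolution of $\rho_K$ can split a flat or ridge-shaped minimum locus into two genuine local minima, while the convexity requirement (i) strongly limits which perturbations of $\rho_K$ are admissible, so the smoothing must be carried out along the watershed (Morse-Smale) structure of $\rho_K$ --- roughly, collapsing the minimum locus to a single point and then tilting slightly --- rather than by averaging. Constraint (ii) should then be restorable afterwards by the controlling-region device used in Step~1 of the proof of Theorem~\ref{thm:approx}, and (iii) by replacing near-flat patches with spherical caps of very large radius, as in the proof of Theorem~\ref{thm:rotational}. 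The mono-monostatic part is harder still, since one must keep the sublevel \emph{and} the superlevel sets of the gauge function connected at the same time while smoothing --- that is, find a smooth body in $(1,1)_c$ close to an essentially arbitrary $K$ with $F(K)=T(K)=1$ --- and this joint control is the piece I would expect to remain genuinely open.
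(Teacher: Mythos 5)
The statement you were asked to prove appears in the paper as Conjecture~\ref{conj:connected}; the paper offers \emph{no} proof, so there is nothing to compare your argument against. What you have written is a proposal for such a proof, and your concluding self-assessment --- that the ``if'' direction, i.e.\ the monotone smoothing step, is the genuinely open part --- is the correct diagnosis.

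Your necessity (``only if'') argument is a real contribution and appears sound. Writing $F=\sup_\Gamma\Phi_\Gamma$ with $\Phi_\Gamma(K)=\min_\Gamma\rho_K\big/\max\{\min_{\Gamma^+}\rho_K,\min_{\Gamma^-}\rho_K\}$ makes each $\Phi_\Gamma$ a continuous functional on centered bodies containing a fixed ball about $o$, so $F$ and $T$ are lower semicontinuous, and together with the elementary bound $F,T\geq 1$ the classes $\{F=1\}$ and $\{T=1\}$ are closed in the Hausdorff topology. You then need only that every monostable polyhedron already lies in $\{F=1\}$, obtained by the folklore smoothing $(P\div\tau\B^3)+\tau\B^3$, recentering, and invoking the Domokos--V\'arkonyi characterization. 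Two points merit more care: the smoothed body has flat patches, so you are tacitly invoking the result of \cite{gomboc2} in a form that does not assume strict convexity at the stable point (the paper's recollection of that theorem asks only for nondegeneracy and smoothness, so this is plausibly fine, but the original hypotheses should be checked); and the reduction of a possibly degenerate monostable polyhedron to a nondegenerate one by a small perturbation is asserted rather than argued.

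In the sufficiency direction your reduction to Theorem~\ref{thm:approx} is exactly the natural one: if a centered $K$ with $F(K)=1$ admits nearby centered bodies in $(1,U)_c$, the theorem produces the required monostable polyhedra, and similarly for $(S,1)_c$ and $(1,1)_c$. You have correctly identified that the entire difficulty is the construction of such a smooth approximant --- a smoothing that simultaneously preserves a single basin of $\rho_K$, stays convex, can be recentered, and has strictly convex boundary at its critical points --- and that no such procedure is currently available. The mono-monostatic case is strictly harder still, since both the sublevel and superlevel structure must be controlled at once, and the nonemptiness of $(1,1)_c$ from \cite{DLS2} supplies only one example rather than a nearby-body construction. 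The proposal is not a proof, but it isolates the correct obstruction, and the lower-semicontinuity argument for the ``only if'' halves looks like a usable lemma should anyone attack the conjecture.
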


\bigskip

\emph{Acknowledgements}. 
The author expresses his gratitude to G\'abor Domokos and, more generally, the members of the Morphodynamics Research Group with whom he had many fruitful discussions on the subject, and also to an unknown referee for many helpful comments.

\end{document}